\numberwithin{equation}{section}
\title{A $4$-sphere with non central radius and its instanton sheaf}
\author{Lucio S. Cirio, Chiara Pagani}
\address[]{\textit{Lucio Simone Cirio} \newline \indent 
Mathematisches Institut,  Georg-August Universit\"at G\"ottingen,
\newline \indent
Bunsenstra\ss e~3-5,~37073~G\"ottingen,~Germany. }
\email{lcirio@uni-math.gwdg.de }
\address[]{\textit{Chiara Pagani} \newline \indent   Universit\'e de Louvain, Institut de recherche en math\'ematique et physique,
\newline \indent
Chemin du Cyclotron 2 bte L7.01.02, 1348 Louvain-la-Neuve, Belgium. \vspace{3pt}
\newline \indent
Mathematisches Institut,  Georg-August Universit\"at G\"ottingen,
\newline \indent
Bunsenstra\ss e~3-5,~37073~G\"ottingen,~Germany. 
}
\email{cpagani@uni-math.gwdg.de}
\theoremstyle{plain}
\newtheorem{thm}{Theorem}[section]
\newtheorem{lem}[thm]{Lemma}
\newtheorem{prop}[thm]{Proposition}
\newtheorem{cor}[thm]{Corollary}
\newtheorem{defi}[thm]{Definition}
\newtheorem{ex}[thm]{Example}
\newtheorem{rem}[thm]{Remark}
\renewcommand{\i}{\mathrm{i}}
\newcommand{\IR}{{\mathbb{R}}}
\newcommand{\IC}{{\mathbb{C}}}
\newcommand{\nn}{\nonumber}
\newcommand{\ot}{\otimes}
\newcommand{\pot}{\overset{.}{\otimes}}
\newcommand{\be}{\begin{equation}}
\newcommand{\ee}{\end{equation}}
\newcommand{\ra}{\rightarrow}
\newcommand{\tn}{\otimes}
\newcommand{\id}{\mathrm{id}}
\newcommand{\A}{\mathcal{A}}
\newcommand{\V}{\mathcal{V}}
\renewcommand{\Q}{\mathcal{Q}}
\renewcommand{\O}{\mathcal{O}}
\newcommand{\Mm}{\mathcal{M}}
\newcommand{\M}[1]{\mathbb{#1}}
\newcommand{\La}[1]{\Lambda_{\scriptscriptstyle{#1}}}
\newcommand{\sph}{\A(S^4_q)}
\newcommand{\rn}{~_{\scriptscriptstyle{N}}\mathbb{R}_q^4 }
\newcommand{\rs}{~_{\scriptscriptstyle{S}}\mathbb{R}_q^4 }
\newcommand{\rns}{~_{\scriptscriptstyle{NS}}\mathbb{R}_q^4 }
\newcommand{\rsn}{~_{\scriptscriptstyle{SN}}\mathbb{R}_q^4 }
\newcommand{\sqrsn}{\widetilde{~_{\scriptscriptstyle{SN}}\mathbb{R}_q^4 }}
\newcommand{\sqrns}{\widetilde{~_{\scriptscriptstyle{NS}}\mathbb{R}_q^4 }}
\renewcommand{\a}[1]{\alpha_{\scriptscriptstyle{#1}}}
\renewcommand{\b}[1]{\beta_{\scriptscriptstyle{#1}}}
\newcommand{\x}[1]{x_{\scriptscriptstyle{#1}}}
\newcommand{\y}[1]{y_{\scriptscriptstyle{#1}}}
\newcommand{\al}{\alpha}
\newcommand{\ali}{\beta}
\renewcommand{\aa}{\mathcal{A}}
\newcommand{\I}{\mathcal{I}}
\renewcommand{\P}{\mathcal{P}}
\newcommand{\co}{\mathcal{B}}
\newcommand{\can}{\chi}
\newcommand{\shs}{\O_{S^4_q}}
\newcommand{\dfs}{\Omega^{\bullet}_{S^4_q}}
\newcommand{\ndf}{_N\Omega^{\bullet}_q}
\newcommand{\sdf}{_S\Omega^{\bullet}_q}
\newcommand{\sndf}{_{SN}\Omega^{\bullet}_q}
\newcommand{\sndfu}{_{SN}\Omega^1_q}
\renewcommand{\d}{\mathrm{d}}
\newcommand{\fns}{f_{\scriptscriptstyle{NS}} }
\newcommand{\fsn}{f_{\scriptscriptstyle{SN}} }
\newcommand{\ai}{\aa \ot_\Psi \I}
\newcommand{\tnn}{\tau_{\scriptscriptstyle{NN}}}
\newcommand{\tss}{\tau_{\scriptscriptstyle{SS}}}
\newcommand{\tns}{\tau_{\scriptscriptstyle{NS}}}
\newcommand{\tsn}{\tau_{\scriptscriptstyle{SN}}}
\newcommand{\zero}[1]{{#1}_{\scriptscriptstyle{(0)}}}
\newcommand{\uno}[1]{{#1}_{\scriptscriptstyle{(1)}}}
\newcommand{\muno}[1]{{#1}_{\scriptscriptstyle{(-1)}}}
\newcommand{\con}{\mathsf{A}}
\newcommand{\ma}{\mathbf{A}}
\begin{document}

\begin{abstract}
We build an $SU(2)$-Hopf bundle over a quantum toric four-sphere whose radius is non central.
The construction is carried out using local methods in terms of sheaves of Hopf-Galois extensions. 
The associated instanton bundle is presented and endowed with a connection with anti-selfdual curvature. \\

\noindent
\textit{Keywords}: instantons, noncommutative spheres, sheaves of quantum algebras, Hopf-Galois extensions. \\
\noindent
\textit{Mathematics Subject Classification (2010)}: 81R60, 16T05, 81T75.
\end{abstract}

\maketitle
\noindent

\tableofcontents

\section{Introduction}

Gauge theories are one of the most beautiful examples of the fruitful interplay between mathematics and physics. The study of instantons in terms of bundles and connections with (anti)selfdual curvatures goes back to the late 70's. The prime example is that of $SU(2)$-instantons on the four-sphere $S^4$ and their underlying Hopf bundle $S^7 \ra S^4$ \cite{at}.

In the last years various constructions of instanton bundles on noncommutative spheres  have been provided by drawing on different methods accordingly to the 'kind' of noncommutativity into play (isospectral, quantum groups, twists,...).

In this paper we deal with a noncommutative four sphere $\sph$ that was first introduced in 
\cite{clsII} in the framework of 2-cocycle deformations of toric varieties. Noncommutative toric varieties belong to a vast class of noncommutative spaces obtained by using Drinfeld's general theory of twists (or 2-cocycles) as underlying source of the deformation.  In the specific case treated there, the Hopf algebra of symmetries on which the 2-cocycle is based is the algebraic torus $(\mathbb{C}^{\times})^n$. Twisting the symmetry induces a functorial deformation in the category of its (co)modules; namely,  every (co)algebra carrying a (co)action of $(\mathbb{C}^{\times})^n$ can be endowed with a new non(co)commutative (co)product (whose explicit expression depends on the Drinfeld twist) compatible with the (co)action of the twisted symmetry. The algebra $\sph$ and its local patches we present below  (and essentially every noncommutative algebra throughout the paper) fit within this machinery. However we will not stress this formalism in the present paper, and we refer the reader to \cite{clsI,clsII} for all the details relevant to these specific deformations. 

A key feature of the sphere $\sph$ is the noncommutativity of the radius, i.e. the algebra generator 
playing the role of the radius does not belong to the center of the algebra. Related to this is also the fact that the sphere is more effectively described in terms of local charts (see \S \ref{sect:4sphere} below). A main consequence of these peculiarities is that previous approaches adopted in the study of instantons on quantum spheres (for instance the construction of a globally defined instanton projector as in \cite{cl00,lpr06}) cannot be used here. The problem is overcome by making use of sheaf theory. We look at $\sph$ as a 'locally ringed quantum space' and use the noncommutative sheaf-theoretical methods developed in \cite{pflaum} to assemble a noncommutative Hopf bundle on it. We then show that this bundle is a sheaf of Hopf-Galois extensions.
\\

The paper is organized as follows. A first section (\S \ref{sect:4sphere}) is dedicated to recall the $4$-sphere $\sph$ with its  description in terms of (isomorphic)  'local patches' $\rn,~\rs$ obtained through noncommutative localization techniques.  
In \S \ref{sec-sheaf} we give a characterization of $\sph$ (and its differential calculus) in terms of sheaves of quantum algebras on the classical sphere $S^4$. Such a sheaf-theoretic description of quantum spaces, that well befits  our $\sph$,  is based on \cite{pflaum}.   
Crucial for the construction of a noncommutative Hopf bundle is the factorization of the intersection of the two local charts  $\rn,~\rs$ as a twisted tensor product of the  algebra $\aa=\aa(SU(2))$ of coordinate functions on $SU(2)$ and a 1-dimensional interval $I$. The two algebras $\aa$ and $ I$ are both commutative, but their twisted tensor product has a noncommutative algebra  multiplication  (cf. Prop. \ref{prop:fact}). This factorization is used in  $\S \ref{sec-Hopf}$ to introduce 'transition functions' allowing for the reconstruction of a quantum principal $\aa(SU(2))$-bundle $\mathcal{P}$ on $\sph$ out of trivial bundles on the local charts. The (noncommutative) algebraic language for principal bundles is that of Hopf-Galois extensions. We prove that the sheaf of the quantum principal $\aa(SU(2)$-bundle is a sheaf of Hopf-Galois extensions.
We conclude the section by constructing a sheaf of algebras associated to the fundamental (co)representation of $SU(2)$ on $\IC^2$, playing the role of the basic instanton bundle on $\sph$, and by sketching how to get instanton bundles with higher instanton numbers. 
In the last section, \S \ref{sec:conn}, we define an $su(2)$-valued connection one form and prove that its curvature is anti-selfdual with respect to the Hodge $*$-operator of the canonical differential calculus on $\sph$. Appendix \ref{app:pflaum} briefly summarizes the construction in \cite{pflaum}.
Appendix \ref{se:app} contains a few remarks about $*$-structures for twisted tensor products.  \\

\noindent
\textbf{Acknowledgments.}
We are pleased to thank Giovanni Landi for suggesting us the problem addressed here  and for fruitful discussions and Richard Szabo for useful comments. LSC thanks SISSA (Trieste) and UCL Louvain for scientific invitations during the development of this research project. Part of the work was carried out at the University of Luxembourg, both of us acknowledge support by the National Research Fund, Luxembourg and COFUND MarieCurie. 

\section{The quantum 4-sphere and its local charts}\label{sect:4sphere}

In this section we recall  the description of the noncommutative four-sphere $\sph$ as from  \cite[\S 3.5]{clsII}. The algebra $\sph$ is a one real parameter deformation of the algebra of coordinate functions on the classical four-sphere, where this latter is seen as a real subspace of the Klein quadric in $\M{CP}^5$.
In \cite[\S 5.3]{clsI} the authors define for each couple of integers $d < n$ a noncommutative Grassmannian  $\A(\mathbb{G}r_\theta(d,n))$ as a quotient of a projective space $\A(\mathbb{CP}^N_\theta)$, $N=\footnotesize{\left(\begin{array}{c}n\\d \end{array} \right)}-1$. The commutation relations among the generators of these algebras depend on an $n \times n$ matrix $\Theta$ of deformation parameters and are derived from a general theory of cocycle (or Drinfeld twists) Hopf deformations. For $d=2$, $n=4$ and a particular choice of the matrix $\Theta$ which reduces the parameters from six complex $\theta_{ij}$ to a pure imaginary one $\theta$, or equivalently to a real
one $q=\mathrm{exp}(\mathrm{i}\, \theta / 2) \, \in \IR$, they obtain  the noncommutative Grassmannian $\A(\mathbb{G}r_q(2,4))$.  The algebra $\A(\mathbb{G}r_q(2,4))$ is generated by 
 `Pl\"ucker coordinates' $\La{ij}$, $i<j \; (i,j=1,\dots, 4),$ subject to the following commutation relations

\be\label{comm_lambda}
\begin{array}{lll}
\La{12}\La{13}= q^{-2} \La{13}\La{12} \quad, \quad &\La{12}\La{14}= q^{-2} \La{14}\La{12} \quad, \quad 
& \La{12}\La{23}= q^{2} \La{23}\La{12} \quad, \vspace{3pt}
\\ 
\La{12}\La{24}= q^{2} \La{24}\La{12} \quad, \quad &\La{12}\La{34}=  \La{34}\La{12} \quad, \quad 
& \La{13}\La{14}=  \La{14}\La{13} \quad, \vspace{3pt}
\\ 
\La{13}\La{23}= q^{2} \La{23}\La{13} \quad, \quad &\La{13}\La{24}= q^2 \La{24}\La{13} \quad, \quad 
& \La{13}\La{34}=  \La{34}\La{13} \quad,\vspace{3pt}
\\ 
\La{14}\La{23}= q^{2} \La{23}\La{14} \quad, \quad &\La{14}\La{24}= q^2 \La{24}\La{14} \quad, \quad 
& \La{14}\La{34}=  \La{34}\La{14} \quad,\vspace{3pt}
\\ 
\La{23}\La{24}=  \La{24}\La{23} \quad, \quad &\La{23}\La{34}= \La{34}\La{23} \quad, \quad 
& \La{24}\La{34}=  \La{34}\La{24} \quad 
\end{array}
\ee
and 
satisfying the Klein quadratic identity
\be\label{quadric_gr}
q\La{12}\La{34}- \La{13} \La{24} + \La{14} \La{23}=0 \; .
\ee
We introduce a $*$-structure on $\A(\mathbb{G}r_q(2,4))$ by defining it on the generators as \footnote{The $*$-structure in \cite{clsII} was defined slightly differently with $\La{14}^* = - q^{-1}\La{23}$.}  
 \be\label{*}
\La{12}^*=  \La{12} \; , \quad \La{13}^*= q \La{24} \; , \quad  \La{14}^*=-q \La{23} \;, 
\quad
  \La{34}^*= \La{34}
\ee
and extending it to the whole algebra as an anti-algebra morphism.
The Klein identity \eqref{quadric_gr} has real form
\be\label{quadric}
q\La{12}\La{34}- q^{-1}\La{13} \La{13}^* -q^{-1} \La{14} \La{14}^*=0 \;
\ee
which is readily shown to correspond to signature $(5,1)$ by considering the change of generators $X:=\frac12 q(\La{12}-\La{34})$ and $R:=\frac12 q(\La{12}+\La{34})$, thus giving
\be\label{eq_sfera}
X^2 +\La{13} \La{13}^* + \La{14} \La{14}^*=R^2  \, .
\ee
\begin{defi}
The coordinate algebra $\sph$ is the $*$-algebra generated by elements $\La{ij}$, $i<j$, $i,j=1,\dots, 4$ satisfying the commutation relation \eqref{comm_lambda} and 
the quadratic identity \eqref{quadric}, and with 
$*$-structure as in
\eqref{*}.
\end{defi}

$\sph$ is a one-parameter deformation of a 4-sphere described in homogeneous coordinates.
We observe that while the deformation parameter $q$ enters the commutation  relations \eqref{comm_lambda} among the generators of $\sph$, the sphere relation  \eqref{eq_sfera} is classical. Nevertheless we remark that the `radius' $R$ of $\sph$ is noncommutative, namely the generator $R$ does not belong to the center of the algebra.  Moreover $R$ does not even generate a left or right denominator set (in the sense of Ore localization), so that it is not possible to localize with respect to it and obtain a `global' affine description of the sphere. As discussed in the introduction,  it follows the need of using a local (sheaf-theoretic) approach  to construct a Hopf bundle on  $\sph$.
 
We introduce two `local patches'  $\rn$, $\rs$ of the sphere $\sph$ which are 
obtained via Ore localization with respect to the two real generators $\La{34}$ and $\La{12}$ respectively.  Such localizations provide quantum analogue of stereographic projections from the North and South poles; we refer to the original paper \cite{clsII} for details. Let us stress a substantial difference between the two generators: while $\La{34}$ is central and hence the Ore localization reduces to the standard commutative localization, $\La{12}$ is not.  

\begin{prop}\cite[Prop.3.19]{clsII}
The degree zero subalgebra $\left(\A(\mathbb{G}r_\theta(2,4)) [\La{34}^{-1}]\right)_0$ of the \textbf{right} Ore localization of $\A(\mathbb{G}r_\theta(2,4))$ with respect to $\La{34}$ is isomorphic to the algebra 
generated by elements $\b{13},~\b{14},~\b{23}$ and $\b{24}$ subject to the commutation relations  
\be
\begin{array}{lll}
\label{ncbeta}
\b{13}\b{14}= \b{14}\b{13} \; , \quad & \b{13}\b{23}= q^2 \b{23} \b{13} \; , \quad &\b{13}\b{24}= q^2 \b{24} \b{13} \; , \vspace{3pt}
\\
\b{14}\b{23}= q^2 \b{23}\b{14} \; ,  & \b{14}\b{24}= q^2 \b{24} \b{14} \;, & \b{23}\b{24}= \b{24} \b{23} \; .
\end{array} 
\ee
\end{prop}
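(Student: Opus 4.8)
The plan is to compute the degree-zero part of the Ore localization explicitly by introducing the natural candidate generators as ratios of Plücker coordinates by the central element $\La{34}$, and then to derive their commutation relations from those of the $\La{ij}$.

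First I would set $\b{ij} := \La{ij}\La{34}^{-1}$ for $(i,j) \in \{(13),(14),(23),(24)\}$; these are the degree-zero elements that ought to generate the subalgebra, being quotients of degree-one generators by the degree-one central element $\La{34}$. Since $\La{34}$ is central (as the excerpt stresses, the Ore localization here reduces to ordinary commutative localization), inversion is unproblematic: $\La{34}^{-1}$ commutes with everything, so I may freely move it across the generators. I would verify that these four elements, together with $\b{12} := \La{12}\La{34}^{-1}$ and the relation inherited from the quadric, indeed generate the whole degree-zero subalgebra; the key point is that any degree-zero monomial is a word in the $\La{ij}$ of equal total degree in numerator and in $\La{34}^{-1}$, and can be rearranged (using centrality of $\La{34}$) into a word in the $\b{ij}$.

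Next I would read off each of the six commutation relations in \eqref{ncbeta} from \eqref{comm_lambda}. For instance, from $\La{13}\La{23} = q^2 \La{23}\La{13}$ and the fact that $\La{34}$ commutes with both, I get
\be
\b{13}\b{23} = \La{13}\La{34}^{-1}\La{23}\La{34}^{-1} = \La{13}\La{23}\La{34}^{-2} = q^2 \La{23}\La{13}\La{34}^{-2} = q^2 \b{23}\b{13},
\ee
and the remaining five relations follow by the same mechanical substitution, using $\La{13}\La{14}=\La{14}\La{13}$, $\La{13}\La{24}=q^2\La{24}\La{13}$, $\La{14}\La{23}=q^2\La{23}\La{14}$, $\La{14}\La{24}=q^2\La{24}\La{14}$, and $\La{23}\La{24}=\La{24}\La{23}$. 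Because $\La{34}$ is central, no extra factors of $q$ are produced when passing the $\La{34}^{-1}$'s through, so the exponents match \eqref{ncbeta} exactly.

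The one genuine point requiring care, which I expect to be the main obstacle, is showing that the map is an \emph{isomorphism} and not merely a surjection onto an algebra with the stated presentation: I must confirm that no relations beyond \eqref{ncbeta} survive in degree zero. Here the quadric \eqref{quadric_gr} plays the decisive role — dividing $q\La{12}\La{34} - \La{13}\La{24} + \La{14}\La{23} = 0$ by $\La{34}^2$ expresses $\b{12} = q^{-1}(\b{13}\b{24} - \b{14}\b{23})$ as a polynomial in the other four, so that $\b{12}$ is not an independent generator and the quadric imposes no further constraint among $\b{13},\b{14},\b{23},\b{24}$. I would then check, by a dimension/normal-form count (for example a Poincaré–Birkhoff–Witt type argument on the ordered monomials in the four $\b{ij}$), that the algebra presented by \eqref{ncbeta} has exactly the same size as the degree-zero subalgebra, so the evident surjection is bijective.
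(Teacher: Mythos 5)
Your proposal is correct and follows essentially the same route as the paper, which states that the proof is a direct computation once one makes the identification $\b{ij} = \La{ij}\La{34}^{-1}$ and notes that $\beta := \La{12}\La{34}^{-1}$ is redundant by the Pl\"ucker relation \eqref{quadric_gr}. Your additional care about injectivity via a normal-form count is a reasonable elaboration of the same argument rather than a different approach.
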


\noindent
The proof (that we avoid to recopy here) is a direct computation once one makes the identification $\b{ij} = \La{ij}\La{34}^{-1}$. Note that the generator $\beta:= \La{12} \La{34}^{-1}$ is redundant; indeed  the Pl\"ucker relation \eqref{quadric_gr} implies 
\be
\label{ro}
\beta= q ( \b{24}\b{13}-  \b{23}\b{14}) = q^{-1} (\b{13}\b{24} - \b{14}\b{23} ) \, .
\ee
The $*$-structure defined  in \eqref{*} induces on the generators $\b{ij}$ the relations
\be\label{*beta}
\b{24}^*= q^{-1} \b{13} \; , \quad  \b{23}^*=-q^{-1} \b{14} \, .
\ee
The resulting $*$-algebra will be denoted by $\rn$. The above equation \eqref{ro} becomes 
\be 
\label{ros}
\beta = \b{13}^*\b{13} + \b{14}^*\b{14} = \b{23}^*\b{23} + \b{24}^*\b{24} \, .
\ee

\begin{prop}\cite[Prop.3.22]{clsII}
The degree zero subalgebra $\left([\La{12}^{-1}]\A(\mathbb{G}r_\theta(2,4))\right)_0$ of the \textbf{left} Ore localization of $\A(\mathbb{G}r_\theta(2,4))$ with respect to $\La{12}$ is isomorphic to the algebra 
generated by elements $\a{13},~\a{14},~\a{23}$ and $\a{24}$ subject to the commutation relations  
\be
\begin{array}{lll}
\a{13}\a{14}= \a{14}\a{13} \; , \quad & \a{13}\a{23}= q^{-2} \a{23} \a{13} \; , \quad &\a{13}\a{24}= q^{-2} \a{24} \a{13} \; , \vspace{3pt}
\\
\a{14}\a{23}= q^{-2} \a{23}\a{14} \; ,  & \a{14}\a{24}= q^{-2} \a{24} \a{14} \;, & \a{23}\a{24}= \a{24} \a{23} \; .
\end{array} 
\ee
\end{prop}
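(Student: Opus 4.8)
The plan is to mirror the argument for the North-pole patch $\rn$ given in the preceding proposition, interchanging the roles played by the central generator $\La{34}$ and the noncentral generator $\La{12}$. Since we now localize on the \textbf{left} at $\La{12}$, the natural degree-zero coordinates are $\a{ij} := \La{12}^{-1}\La{ij}$, with the inverse placed on the left. I would take the four elements $\a{13}, \a{14}, \a{23}, \a{24}$ as candidate generators and check that they satisfy precisely the displayed relations.

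First I would dispose of the one genuinely new issue, namely the noncentrality of $\La{12}$. Inspecting \eqref{comm_lambda} shows that $\La{12}$ is nonetheless a \emph{normal} element: it $q$-commutes with every Pl\"ucker generator, with $\La{12}\La{13} = q^{-2}\La{13}\La{12}$, $\La{12}\La{23} = q^{2}\La{23}\La{12}$ and the analogous relations for $\La{14}, \La{24}$, while $\La{12}\La{34} = \La{34}\La{12}$. Normality ensures that the powers of $\La{12}$ form a two-sided denominator set, so the left Ore localization exists, and it furnishes the straightening rules $\La{12}^{-1}\La{13} = q^{2}\La{13}\La{12}^{-1}$, $\La{12}^{-1}\La{23} = q^{-2}\La{23}\La{12}^{-1}$ (and likewise for $\La{14}, \La{24}$, with $\La{34}$ commuting freely). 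These are the only tool needed to move each $\La{12}^{-1}$ to the left of any monomial.

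The commutation relations then drop out by direct computation. For example $\a{13}\a{23} = \La{12}^{-1}\La{13}\La{12}^{-1}\La{23} = q^{-2}\La{12}^{-2}\La{13}\La{23} = \La{12}^{-2}\La{23}\La{13}$, while $\a{23}\a{13} = q^{2}\La{12}^{-2}\La{23}\La{13}$, giving $\a{13}\a{23} = q^{-2}\a{23}\a{13}$; each of the remaining five relations reduces in the same way to one straightening rule followed by one instance of \eqref{comm_lambda}. I would also record that the fifth degree-zero generator $\a{34} := \La{12}^{-1}\La{34}$ is redundant: left-multiplying the Klein identity \eqref{quadric_gr} by $\La{12}^{-1}$ and straightening yields $\a{34} = q(\a{13}\a{24} - \a{14}\a{23})$, the South-pole analogue of \eqref{ro}, so that the four elements above indeed generate the whole degree-zero subalgebra.

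The main obstacle I anticipate is not the $q$-bookkeeping, which is routine once normality is in hand, but the completeness half of the isomorphism claim: showing that the four generators exhaust the degree-zero subalgebra and that the displayed relations constitute a \emph{complete} presentation, so that the evident surjection from the abstractly presented algebra (a quantum affine space) onto the localized degree-zero part is in fact injective. Here I would invoke a Poincar\'e--Birkhoff--Witt basis of ordered monomials $\a{13}^{a}\a{14}^{b}\a{23}^{c}\a{24}^{d}$ on both sides together with a Hilbert series comparison; the classical limit $q \to 1$, in which the $\a{ij}$ become the ordinary affine coordinates of stereographic projection at the South pole, confirms that no further relations can appear.
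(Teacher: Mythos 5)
Your proposal is correct and follows essentially the same route as the paper, which simply notes that the relations follow by direct computation once one sets $\a{ij}=\La{12}^{-1}\La{ij}$ and records the redundancy of $\alpha=\La{12}^{-1}\La{34}$ via the Pl\"ucker relation, deferring all details to the cited reference \cite[Prop.~3.22]{clsII}. Your additional remarks on the normality of $\La{12}$ (justifying the left Ore localization) and on the completeness of the presentation only make explicit what the paper leaves implicit.
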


\noindent
Similarly to the previous case, one gets the relations above by putting $\a{ij}=\La{12}^{-1}\La{ij}$; by using \eqref{quadric_gr} the element $\alpha:= \La{12}^{-1} \La{34} $ can be expressed in terms of the $\a{ij}$ as 
\be
\label{roi}
\alpha= q^{-1} ( \a{24}\a{13}-  \a{23}\a{14}) = q (\a{13}\a{24} - \a{14}\a{23} ) \, .
\ee
We denote by $\rs$ the $*$-algebra generated by the $\a{ij}$ and endowed with the $*$-structure 
\be\label{*alpha}
\a{24}^*= q^{-3} \a{13} \; , \quad  \a{23}^*=-q^{-3} \a{14} \, .
\ee
induced from \eqref{*}. The equation \eqref{roi} becomes
$$ \alpha = q^{-4} ( \a{13}^*\a{13} + \a{14}^*\a{14}) = q^4 (\a{23}^*\a{23} + \a{24}^*\a{24} ) \, .
$$

\begin{prop}\label{prop:G} 
There exists a $\, *$-algebra isomorphism $\Q:\rs\rightarrow\rn$ defined on generators as
\be
\label{mapQ}
 \Q(\a{13})=  q^{-2} \b{13} 
 , \quad
 \Q(\a{24})=  q^{2} \b{24} 
 , \quad
 \Q(\a{14} )= q^{-2} \b{14} 
 , \quad
 \Q(\a{23})=  q^{2} \b{23} 
 , \quad
 \Q(q)=  q^{-1} \, .
\ee
\end{prop}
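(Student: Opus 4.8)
The plan is to verify directly that the prescription \eqref{mapQ} extends to a well-defined $*$-algebra map and then to write down its inverse. I would first fix the meaning of $\Q(q)=q^{-1}$: reading $\rs$ and $\rn$ as algebras over $\IC[q,q^{-1}]$ with $q$ central and self-adjoint, $\Q$ is the $\IC$-algebra homomorphism that acts on the ground ring by the involution $q\mapsto q^{-1}$ and on the four generators by \eqref{mapQ}. Since each chart is presented purely by the quadratic commutation relations among its four generators — the redundant elements $\alpha$, $\beta$ of \eqref{roi}, \eqref{ro} and the norm-type identities such as \eqref{ros} being derived consequences, with no surviving quadric constraint — it suffices to check the commutation relations and the involution on generators.

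For well-definedness I would substitute \eqref{mapQ} into each of the six relations defining $\rs$ and check that the resulting identity holds in $\rn$. The mechanism is that $\Q(q)=q^{-1}$ converts every factor $q^{-2}$ of a $\rs$-relation into a factor $q^{2}$, while the scalar prefactors $q^{\pm2}$ attached to the generators cancel pairwise within each monomial; for instance $\a{13}\a{23}=q^{-2}\a{23}\a{13}$ is carried to $\b{13}\b{23}=q^{2}\b{23}\b{13}$, which is exactly one of \eqref{ncbeta}, and the remaining five relations are disposed of identically. Next I would check $*$-compatibility, $\Q(\a{ij}^{*})=\Q(\a{ij})^{*}$. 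This is the point where the numerical factors $q^{\pm2}$ in \eqref{mapQ} are forced: the $\rs$-involution \eqref{*alpha} carries a factor $q^{-3}$ whereas the $\rn$-involution \eqref{*beta} carries a factor $q^{-1}$, and the discrepancy is absorbed precisely by the rescaling together with $\Q(q)=q^{-1}$. Concretely $\Q(\a{24}^{*})=\Q(q^{-3}\a{13})=q^{3}q^{-2}\b{13}=q\,\b{13}$ matches $\Q(\a{24})^{*}=(q^{2}\b{24})^{*}=q^{2}q^{-1}\b{13}=q\,\b{13}$, and similarly for $\a{23}$; the relations for $\a{13}^{*}$, $\a{14}^{*}$ then follow by applying $*$ once more.

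Bijectivity is then immediate: $\Q$ sends each generator to an invertible scalar multiple of a generator and acts on the ground ring by an involution, so its inverse is the $*$-homomorphism $\rn\to\rs$ determined by $\b{13}\mapsto q^{-2}\a{13}$, $\b{24}\mapsto q^{2}\a{24}$, $\b{14}\mapsto q^{-2}\a{14}$, $\b{23}\mapsto q^{2}\a{23}$ and $q\mapsto q^{-1}$, and one checks that the two composites fix the generators. I do not expect a genuine obstacle here — the statement is a finite verification — and the only points demanding care are the bookkeeping of the powers of $q$ and the consistent reading of $\Q(q)=q^{-1}$, which is exactly what reconciles the opposite $q^{\pm2}$ appearing in the commutation relations of the two charts. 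Had one instead used the naive identification $\a{ij}\mapsto\b{ij}$, the commutation relations would still match but the involution would fail by a factor $q^{\pm4}$, which is why \eqref{mapQ} must be normalized as it is. Conceptually the isomorphism reflects that $\rn$ and $\rs$ are two copies of the same quantum Euclidean four-space, localized at the central generator $\La{34}$ and at the non-central generator $\La{12}$ respectively, the parameter inversion encoding the mirror relation between these two localizations.
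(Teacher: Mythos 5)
Your verification is correct and is precisely the direct computation that the paper omits (the authors state only that ``the proof is a direct computation''): one checks the six commutation relations, the compatibility with the involutions, and exhibits the obvious inverse. Your explicit reading of $\Q(q)=q^{-1}$ as an involution of the ground ring $\IC[q,q^{-1}]$, and your observation that the $q^{\pm2}$ normalizations in \eqref{mapQ} are forced by the mismatch between the $q^{-3}$ in \eqref{*alpha} and the $q^{-1}$ in \eqref{*beta}, supply exactly the bookkeeping the paper leaves to the reader.
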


The proof is a direct computation and we omit it. A geometric interpretation of this isomorphism is discussed in Remark \ref{commQ}.\\

The intersection of the two `charts'  $\rn$ and $\rs$ is geometrically obtained by removing the `origin' in each patch. Algebraically, this amounts to extending $\rn$ with the inverse of $\beta$, i.e. to introducing an extra generator $\beta^{-1}$ with commutation relations
\be
\b{13}\beta^{-1}= q^{-2}\beta^{-1} \b{13} \; , \quad 
\b{14}\beta^{-1}= q^{-2} \beta^{-1}\b{14} \; , \quad
\b{23}\beta^{-1}= q^{2} \beta^{-1}\b{23} \; , \quad
\b{24} \beta^{-1}= q^{2} \beta^{-1}\b{24}  
\ee
together with 
\be
\label{roinv}%$  $
\beta^{-1}(\b{13}\b{24}- \b{14}\b{23}) = q = (\b{13}\b{24}- \b{14}\b{23}) \beta^{-1} \, .
\ee
We denote by $\rsn$ the extension of $\rn$ generated by adjoining the element  $\beta^{-1}$. Similarly, we can extend $\rs$ with the inverse of $\alpha$ by introducing an extra generator $\al^{-1}$ with commutation relations 
\be
\a{13}\al^{-1}= q^{2}\al^{-1}\a{13} \; , \quad 
\a{14}\al^{-1}= q^{2}\al^{-1}\a{14} \; , \quad
\a{23}\al^{-1}= q^{-2}\al^{-1}\a{23} \; , \quad
\a{24}\al^{-1}= q^{-2}\al^{-1}\a{24}  
\ee
together with  
\be
\label{roiinv}
\al^{-1}(\a{24}\a{13}- \a{23}\a{14}) = q =  (\a{24}\a{13}- \a{23}\a{14}) \al^{-1} \, .
\ee
We denote the resulting algebra by $\rns$. The above two descriptions of the intersection are equivalent as a consequence of Proposition \ref{prop:G}:
\begin{cor} 
\label{cor:G} 
The map $\Q$ extends to a 
$\, *$-algebra isomorphism $\Q:\rns\rightarrow\rsn$ with $\Q(\alpha)=  \beta$. 
\end{cor}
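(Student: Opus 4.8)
The plan is to reduce the entire statement to the single identity $\Q(\alpha)=\beta$; once this is verified, the extension to the two localizations, the bijectivity, and the compatibility with the $*$-structures all follow formally from the universal property of Ore localization. So the first thing I would do is compute $\Q(\alpha)$ from the defining expression \eqref{roi}, treating $\Q$ as an algebra homomorphism and keeping in mind that $\Q(q)=q^{-1}$, hence $\Q(q^{-1})=q$. Starting from $\alpha=q^{-1}(\a{24}\a{13}-\a{23}\a{14})$ one obtains
\be
\Q(\alpha)=q\big((q^{2}\b{24})(q^{-2}\b{13})-(q^{2}\b{23})(q^{-2}\b{14})\big)=q(\b{24}\b{13}-\b{23}\b{14}),
\ee
which is exactly $\beta$ by \eqref{ro}. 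As a consistency test I would also check that the second expression for $\alpha$ in \eqref{roi} maps onto the second expression for $\beta$ in \eqref{ro}.

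With $\Q(\alpha)=\beta$ established, I would invoke the universal property as follows. The algebra $\rns$ is the Ore localization of $\rs$ at the multiplicative set generated by $\alpha$, and likewise $\rsn$ is the localization of $\rn$ at $\beta$. Composing the isomorphism $\Q\colon\rs\to\rn$ of Proposition \ref{prop:G} with the canonical inclusion $\rn\hookrightarrow\rsn$ gives a homomorphism that sends $\alpha$ to the invertible element $\beta$, so it factors uniquely through $\rns$ and yields $\Q\colon\rns\to\rsn$ with $\Q(\al^{-1})=\beta^{-1}$. Applying the same argument to $\Q^{-1}\colon\rn\to\rs$, which sends $\beta$ to $\alpha$ (invertible in $\rns$), produces an extension $\rsn\to\rns$; uniqueness forces the two extensions to be mutually inverse, so the extended $\Q$ is an isomorphism. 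For the $*$-structure I would note that $\alpha^{*}=\alpha$ and $\beta^{*}=\beta$ (immediate from the Hermitian expressions displayed after \eqref{roi} and \eqref{ro}, or from $\La{12}^{*}=\La{12}$, $\La{34}^{*}=\La{34}$), so $\al^{-1}$ is self-adjoint and maps to the self-adjoint $\beta^{-1}$, whence $\Q$ remains a $*$-homomorphism.

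The one point I would check carefully first is the hypothesis of the universal property, namely that $\alpha$ really generates an Ore set in $\rs$. This holds because $\alpha$ is normal: conjugation by $\alpha$ rescales each generator $\a{ij}$ by a power of $q$, so its powers satisfy the (two-sided) Ore condition and the left/right localizations agree. Since the algebras here are given by generators and relations, I would regard this as the genuine—though very mild—obstacle, and I would also record the fully explicit alternative: simply declare $\Q(\al^{-1})=\beta^{-1}$ and verify directly that each commutation relation of $\al^{-1}$ with the $\a{ij}$ is carried to the corresponding relation of $\beta^{-1}$ with the $\b{ij}$, and that $\Q$ applied to the normalization \eqref{roiinv} reproduces \eqref{roinv}. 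This last check is short and routine, and it confirms that the presentation-theoretic and the localization-theoretic descriptions of the extension coincide.
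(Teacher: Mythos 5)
Your proposal is correct and follows exactly the route the paper implies: the paper offers no explicit proof of Corollary \ref{cor:G}, presenting it as an immediate consequence of Proposition \ref{prop:G}, and the only substantive content is the computation $\Q(\alpha)=q(\b{24}\b{13}-\b{23}\b{14})=\beta$, which you carry out correctly (and your consistency checks on \eqref{roiinv} versus \eqref{roinv} and on the self-adjointness of $\alpha$ and $\beta$ all hold). Your additional care about the Ore/universal-property justification and the normality of $\alpha$ is a sound elaboration of what the paper leaves implicit, not a different argument.
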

An algebra isomorphism between $\rns$ and $\rsn$ was already observed in  \cite{clsII}, but realized there via a different map  $G: \rns \rightarrow \rsn$ which was defined on the generators as $G(\a{ij})= \al \b{ij}$, $G(\ali)   = \alpha^{-1}$ and $G(q)=q$. The different choice of the isomorphism $\Q$ made here is relevant to what follows.
\medskip

\section{The sheaf of the quantum $4$-sphere}\label{sec-sheaf}

In this section we construct a sheaf of noncommutative algebras on the classical 
four-sphere $S^4$. This is the structure sheaf describing a \textit{quantum space}, in the terminology of \cite[\S 2]{pflaum}, and it is obtained by considering suitable algebras constructed out of the local charts of the noncommutative $4$-sphere $\sph$. We also provide an analogous  sheaf-theoretic realization of the noncommutative differential calculus of $\sph$. 

We begin with a new description of the intersection of the charts in terms of \textit{twisted tensor product} of  algebras. We refer to Appendix \ref{se:app} for the terminology and the few results of the theory of twisted tensor products that will be used. The convenience of using this characterization of the intersection of the charts  will be manifest in the construction of the quantum Hopf bundle in \S \ref{sec-Hopf}. 

\subsection{The intersection of charts}

The intersection of the two charts $\rn$, $\rs$ of $\sph$ admits a very natural geometric interpretation if we write the real and positive generator $\beta^{-1}$ of $\rsn$ as $\beta^{-1}=r^{-2}$.  In Prop. \ref{prop:su2beta} and  Prop. \ref{prop:su2alpha} below we show  that by suitably rescaling the generators of $\rsn$ by $r^{-1}$ (or equivalently those of $\rsn$ by $r$)  we can recover a classical $SU(2)\simeq S^3$ inside the intersection of the two charts.

\begin{defi}
We denote by $\sqrsn$ the $*$-algebra generated by elements $\{ \b{13},\b{14},\b{23},\b{24}, r^{-1} \}$  with commutation relations
\begin{equation}
\label{sqrsncr}
\b{13} r^{-1} = q^{-1} r^{-1}\b{13} \; , \quad \b{14} r^{-1} = q^{-1} r^{-1}\b{14} \; , \quad
\b{23} r^{-1} = q r^{-1}\b{23} \; , \quad \b{24} r^{-1} = q r^{-1}\b{24} \; , \quad
\end{equation} 
and satisfying
\begin{equation}
\label{sqrsnrel}
r^{-2}(\b{13}\b{24} - \b{14}\b{23}) = q = (\b{13}\b{24} - \b{14}\b{23}) r^{-2} \, .
\end{equation}
The $*$-structure is the one given in \eqref{*beta}, together with $(r^{-1})^*=r^{-1}$.
\end{defi}
\noindent

From \eqref{sqrsnrel} we can deduce that $r^{-2}$ is invertible in $\sqrsn$ with inverse (denoted $r^2$) given by 
\begin{equation}
\label{r2}
r^2:=q^{-1}(\b{13}\b{24} -\b{14}\b{23}) = \b{24}^*\b{24} + \b{23}^*\b{23} \, .
\end{equation}
Thus $r^{-1}$ is invertible as well: denoting with $r$ its inverse, the explicit expression is $r= r^2 r^{-1} = r^{-1} r^2$. We note that $r^2$ is a combination of the $\b{ij}$'s alone, contrary to $r$ which 
contains a contribution from $r^{-1}$ as well.

The $*$-algebras $\rsn$ and $\sqrsn$ are not isomorphic, nevertheless they describe the same `quantum space' in the sense that they generate the same $C^*$-algebra. Indeed at the $C^*$-algebra level the element $\alpha^{-1} = \b{24}^* \b{24} + \b{23}^* \b{23}$ is positive being the sum of the two positive elements $\b{24}^* \b{24}$ and  $\b{23}^* \b{23}$ (see e.g. \cite[Thm. I.4.5, Cor. I.4.4]{dav}). Therefore by the general theory there exists  a (unique) element $r=r^*\in \rsn$ such that   $\alpha^{-1}=r^2$, (see e.g. \cite[Cor. I.4.1]{dav}).

\begin{prop}\label{prop:su2beta}
Let $\A$ be the  $*$-subalgebra of $\sqrsn$ generated by the elements
\be
\label{gensu2}
\x{23}:= \b{23}r^{-1} \, , \quad \x{24}:=\b{24}r^{-1} \, , 
\quad \x{23}^*=r^{-1}\b{23}^* \, , \quad \x{24}^* =r^{-1}\b{24}^* \, . 
\ee
Then   the sphere relation
$$ \x{23}^* \x{23} + \x{24}^* \x{24} = 1 \,  $$
holds 
and $\A$ is commutative.
\end{prop}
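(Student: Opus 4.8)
The plan is to prove both assertions by a direct computation in $\sqrsn$, the guiding principle being that the rescaling by $r^{-1}$ in \eqref{gensu2} is engineered to absorb exactly the $q$-twists of the commutation relations, so that the rescaled variables carry zero net twist.

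For the sphere relation I would substitute the definitions from \eqref{gensu2} and collect the two inner factors:
\[
\x{23}^*\x{23} + \x{24}^*\x{24} = r^{-1}\b{23}^*\b{23}\,r^{-1} + r^{-1}\b{24}^*\b{24}\,r^{-1} = r^{-1}\bigl(\b{23}^*\b{23} + \b{24}^*\b{24}\bigr)r^{-1}.
\]
By \eqref{r2} the bracket equals $r^2$. A one-line check using \eqref{sqrsncr} on $r^2 = q^{-1}(\b{13}\b{24}-\b{14}\b{23})$ shows that $r^{-1}$ commutes with $r^2$, so the right-hand side equals $r^2 r^{-2} = 1$.

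For commutativity I would check that the four generators commute pairwise; since $*$ is an anti-homomorphism it suffices to treat $[\x{23},\x{24}]$, $[\x{23},\x{23}^*]$ and $[\x{23},\x{24}^*]$, the remaining brackets following by applying $*$. In each bracket one pushes all factors $r^{-1}$ to the right using \eqref{sqrsncr}, substitutes the $*$-conjugates via \eqref{*beta} where needed (e.g. $\b{24}^* = q^{-1}\b{13}$), and normal-orders the remaining $\b{ij}$ using \eqref{ncbeta}. For instance $\x{23}\x{24}=q^{-1}\b{23}\b{24}\,r^{-2}$ and $\x{24}\x{23}=q^{-1}\b{24}\b{23}\,r^{-2}$ coincide because $\b{23}\b{24}=\b{24}\b{23}$; likewise both $\x{23}\x{24}^*$ and $\x{24}^*\x{23}$ reduce to $q\,\b{23}\b{13}\,r^{-2}$, and both $\x{23}\x{23}^*$ and $\x{23}^*\x{23}$ reduce to $-q\,\b{23}\b{14}\,r^{-2}$. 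In every case the two orderings collapse to one and the same normal-ordered monomial, the accompanying powers of $q$ matching exactly.

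The computations are elementary bookkeeping of powers of $q$, so I do not expect a genuine obstacle; the only delicate point is the mixed brackets $[\x{23},\x{24}^*]$ and $[\x{24},\x{23}^*]$, where \eqref{ncbeta} and \eqref{*beta} must be invoked simultaneously and one must be careful to keep the $*$-conjugates consistent. Conceptually the proposition records that the exponents appearing in \eqref{sqrsncr} are the ``square roots'' of the twists in \eqref{ncbeta}, so that after rescaling all twisting cancels; this is precisely what makes $\A$ commutative, and indeed it will be identified with the coordinate algebra of $SU(2)\simeq S^3$.
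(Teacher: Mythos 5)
Your proposal is correct and follows essentially the same route as the paper: the sphere relation is obtained by sandwiching \eqref{r2} between two factors of $r^{-1}$ (the paper literally says ``multiplying eq.~\eqref{r2} from both sides by $r^{-1}$''), and commutativity is a direct normal-ordering computation, which you carry out explicitly and whose $q$-bookkeeping checks out. One tiny slip in your reduction: $[\x{24},\x{24}^*]$ is not recovered by applying $*$ to the three brackets you list, so it must be checked separately --- though it is word-for-word the computation you did for $[\x{23},\x{23}^*]$ with $\b{23},\b{14}$ replaced by $\b{24},\b{13}$ and the sign dropped.
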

\proof{The commutativity among the generators can be checked by a direct computation. The
sphere relation is  obtained by multiplying eq. \eqref{r2} from both sides by $r^{-1}$. \qed} \\

\noindent
For future convenience, let us  organise the generators of the algebra $\A$ as entries of a $2 \times 2$ matrix
\be\label{matrixA}
A:=
\begin{pmatrix} 
\phantom{-}\x{23} & \x{24} \; \\
-\x{24}^* & \x{23}^* \;    \end{pmatrix}  
\ee
with $\det(A) =\x{23} \x{23}^* + \x{24} \x{24}^*=1$.
\begin{prop}
\label{hopf}
The $\, *$-algebra $\A$ can be endowed with a Hopf algebra structure by defining the coproduct, counit and antipode on the generators as
$$\Delta(A)=A \overset{.}{\otimes} A~, \quad\varepsilon(A)=\M{I} ~, \quad
S(A)= \begin{pmatrix}
\, \x{23}^*  & - \x{24} \; \\
\, \x{24}^* & \phantom{-}\x{23} \; 
\end{pmatrix} \, .
$$
The resulting Hopf algebra is the coordinate Hopf algebra $\aa(SU(2))$ of $SU(2)$. 
\end{prop}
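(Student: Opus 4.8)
The plan is to recognise $A$ as the fundamental unitary of $SU(2)$ and to transport the standard Hopf structure. Recall that $\aa(SU(2))$ is the commutative $*$-algebra generated by the entries of a $2\times 2$ matrix $U$ subject to unitarity $U^*U = UU^* = \M{I}$ and $\det(U)=1$, equipped with the matrix coproduct, counit and antipode $\Delta(U)=U\pot U$, $\varepsilon(U)=\M{I}$, $S(U)=U^{-1}$. By Proposition \ref{prop:su2beta} the algebra $\A$ is commutative and its generators satisfy $\det(A)=\x{23}\x{23}^*+\x{24}\x{24}^*=1$; moreover the matrix $S(A)$ displayed in the statement is exactly the conjugate transpose $A^*$ of $A$, and a one-line computation (using commutativity and the sphere relation) gives $A^*A = AA^* = \M{I}$. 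Thus $A$ is a unitary matrix of determinant one over the commutative $*$-algebra $\A$, so that $\A$ is $*$-isomorphic to $\aa(SU(2))$; it then remains only to check that the three maps are well defined on $\A$ and obey the Hopf axioms.

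For well-definedness I would use that, $\A$ being commutative with defining relation $\det(A)=1$ (together with the $*$-relations), an algebra $*$-homomorphism out of $\A$ is determined by the images of the generators once one verifies that these images commute and still satisfy the sphere relation. For $\Delta$ the images are the entries of $A\pot A$, and multiplicativity of the determinant (the two tensor legs each contributing $\det(A)=1$) yields the determinant-one relation in $\A\ot\A$, while the special unitary shape of $A$ is reproduced by $A\pot A$, so $\Delta$ preserves the $*$-structure. For $\varepsilon$ the images are the entries of $\M{I}$, which trivially meet the relation. For the antipode, $S(A)=A^{-1}$ is again a unitary matrix of determinant one, so $S$ extends to a $*$-preserving map which is an algebra anti-homomorphism in general and hence an algebra homomorphism here, $\A$ being commutative.

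Finally I would verify the axioms, each reducing to a matrix identity for $A$. Coassociativity $(\Delta\ot\id)\Delta=(\id\ot\Delta)\Delta$ is the associativity of the iterated comultiplication $A\pot A\pot A$; the counit axioms $(\varepsilon\ot\id)\Delta=\id=(\id\ot\varepsilon)\Delta$ follow from $\M{I}$ being the unit for matrix multiplication; and the antipode axioms $m(S\ot\id)\Delta=\eta\varepsilon=m(\id\ot S)\Delta$ are precisely the identities $S(A)\,A=\M{I}=A\,S(A)$, i.e. $A^{-1}A=\M{I}=AA^{-1}$, which hold since $\det(A)=1$. Expanding each matrix identity into relations among the generators gives exactly the classical $SU(2)$ computation. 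I do not expect a serious obstacle: the only place where the noncommutative origin of $A$ could have interfered is in the commutativity of the matrix entries, and this is precisely what Proposition \ref{prop:su2beta} supplies, so the entire content is the routine bookkeeping of checking that the maps respect the presentation and the $*$-structure once commutativity and $\det(A)=1$ are in hand.
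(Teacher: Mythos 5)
Your proposal is correct and follows exactly the standard argument the paper relies on: the paper states this proposition without proof, treating it as the classical identification of a commutative $*$-algebra generated by the entries of a special unitary $2\times 2$ matrix with $\aa(SU(2))$. Your verification that $S(A)=A^*=A^{-1}$, that the maps respect the presentation (commutativity plus $\det(A)=1$ and the $*$-relations), and that the Hopf axioms reduce to matrix identities is precisely the intended content, with the key input being the commutativity and sphere relation supplied by Proposition \ref{prop:su2beta}.
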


We now show that $\sqrsn$ can be factorized  into a product of a commutative $3$-sphere $S^3\simeq SU(2)$ represented by $\aa$ and  of a $1$-dimensional interval $I$, algebraically described as the $*$-algebra $\I$ generated by $\{r,r^{-1}\}$ satisfying the relation $rr^{-1}=r^{-1}r=1$. While both $\aa$ and $\I$ are commutative, the noncommutativity emerges from their tensor product. The factorization is expressed in Proposition \ref{prop:fact} below. This is an elementary result from the algebraic point of view, nevertheless it provides a very nice geometrical picture. 
\medskip

\begin{defi}
We denote by $\ai$ the twisted tensor product algebra consisting of the vector space $\aa \ot \I$ endowed with the multiplication 
$$
m_\theta :=(m_\aa \ot m_\I)(\id_\aa \ot \Psi \ot \id_\I)
$$
where $\Psi : \I \ot \aa \rightarrow \aa \ot \I$ is the linear map defined on the vector space base elements by
\be\label{twist}
\Psi \left( r^{\pm n} \ot \x{23}^a ~ ( \x{23}^*)^b ~ \x{24}^c ~ (\x{24}^*)^d \right) :=
 q^{\pm n (a+c) \mp n(b+d)} ~ \x{23}^a ~ ( \x{23}^*)^b ~\x{24}^c ~ (\x{24}^*)^d \ot r^{\pm n}
\ee 
for all integers $n,a,b,c,d  \in \mathbb{N} \cup \{ 0\}$. 
\end{defi}

\noindent
We remark that the twist $\Psi$ is normal: $\Psi(1 \ot x) = x \ot 1$ and $\Psi(r^{\pm a} \ot 1) = 1 \ot r^{\pm a}$, $\forall x \in \aa$, $a \in \mathbb{N}$. Notice that $\Psi $ is \textit{not} an algebra morphism. On the algebra generators it reads  
\begin{eqnarray}\label{twist-gen}
\Psi(r^{\pm 1} \ot \x{23}) = q^{\pm 1} \x{23} \ot r^{\pm 1}   \quad &;& \quad    \Psi(r^{\pm 1} \ot \x{23}^*) = q^{\mp 1} \x{23}^* \ot r^{\pm 1}
\nn \\
\Psi(r^{\pm 1} \ot \x{24}) = q^{\pm 1} \x{24} \ot r^{\pm 1}  \quad &;& \quad    \Psi(r^{\pm 1} \ot \x{24}^*) = q^{\mp 1} \x{24}^* \ot r^{\pm 1} \, .
\end{eqnarray} 

\begin{lem}
The algebra $\ai$ is associative and unital.
\end{lem}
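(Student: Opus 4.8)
The plan is to verify that the twisted tensor product $\ai$ is associative and unital by appealing to the general criterion for twisted tensor products (recalled in Appendix \ref{se:app}), which reduces both properties to compatibility conditions on the twisting map $\Psi$. The standard theorem states that given two unital algebras $\aa$ and $\I$ and a linear map $\Psi\colon \I \ot \aa \to \aa \ot \I$, the product $m_\theta =(m_\aa \ot m_\I)(\id_\aa \ot \Psi \ot \id_\I)$ on $\aa \ot \I$ is associative and unital precisely when $\Psi$ is \emph{normal} (i.e. $\Psi(1\ot x)=x\ot 1$ and $\Psi(y\ot 1)=1\ot y$) and satisfies the two \emph{hexagon} (pentagon-type) conditions expressing compatibility of $\Psi$ with the multiplications $m_\aa$ and $m_\I$ separately:
\begin{align*}
\Psi \circ (m_\I \ot \id_\aa) &= (\id_\aa \ot m_\I)\circ(\Psi \ot \id_\I)\circ(\id_\I \ot \Psi), \\
\Psi \circ (\id_\I \ot m_\aa) &= (m_\aa \ot \id_\I)\circ(\id_\aa \ot \Psi)\circ(\Psi \ot \id_\aa).
\end{align*}
Normality has already been recorded in the text immediately after the definition, so the real content is checking these two braiding/hexagon identities for the explicit $\Psi$ given in \eqref{twist}.

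First I would observe that $\I$ is the (commutative) group algebra of $\IZ$ on the basis $\{r^n\}_{n\in\IZ}$, and that $\aa$ has the monomial basis $\x{23}^a(\x{23}^*)^b\x{24}^c(\x{24}^*)^d$; since both $m_\aa$ and $m_\I$ send basis elements to (scalar multiples of) basis elements, it suffices to verify the hexagon identities on pure basis tensors rather than on general elements. The key structural point, and what makes the verification mechanical, is that $\Psi$ acts on each basis tensor $r^{\pm n}\ot w$ simply by flipping the two factors and multiplying by a scalar $q^{\pm n\,\mu(w)}$, where $\mu(w):=(a+c)-(b+d)$ is an integer-valued weight that is \emph{additive} under multiplication in $\aa$, namely $\mu(w w')=\mu(w)+\mu(w')$. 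I would isolate this additivity as the crucial observation.

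Given additivity of $\mu$, both hexagon identities reduce to the single scalar bookkeeping fact that $q^{\pm n\,\mu(w)}$ is multiplicative in the appropriate argument. For the first identity, applying $\Psi$ after multiplying two interval elements $r^m r^n=r^{m+n}$ produces the scalar $q^{(m+n)\mu(w)}$, which matches the product $q^{m\mu(w)}q^{n\mu(w)}$ obtained from the right-hand composite where $\Psi$ is applied twice in succession; here one uses that the intermediate $\aa$-factor $w$ is unchanged by the relabelling of interval exponents. For the second identity, multiplying two $\aa$-monomials $w w'$ inside $\Psi$ yields the scalar $q^{\pm n\,\mu(w w')}=q^{\pm n(\mu(w)+\mu(w'))}$ by additivity of $\mu$, which is exactly the scalar accumulated by applying $\Psi$ twice on the right-hand side. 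Both sides then carry the same basis element, so the identities hold. Unitality follows directly from normality together with the fact that $1\ot 1$ is the unit, as the general criterion guarantees.

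The main obstacle is not conceptual but combinatorial: one must keep careful track of the signs in the exponent $\pm n(a+c)\mp n(b+d)$ under the nested composites of $\Psi$, ensuring that the weight contributions from the starred and unstarred generators are correctly paired when the interval exponent is negative as well as positive. I expect the verification to go through cleanly once the weight function $\mu$ and its additivity are made explicit; the only place requiring genuine care is confirming that the two sign conventions in \eqref{twist} are mutually consistent across both hexagon conditions, so that no spurious factor of $q^{\pm}$ is introduced. Given the simplicity of the diagonal form of $\Psi$, I would present this as a short direct check rather than invoking heavy machinery.
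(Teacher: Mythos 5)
Your proposal is correct and follows essentially the same route as the paper: both reduce the claim to normality of $\Psi$ plus the two compatibility conditions \eqref{P1}--\eqref{P2} from the general theory of twisted tensor products, and then verify them directly from the explicit diagonal form \eqref{twist}. Your additive weight function $\mu(w)=(a+c)-(b+d)$ is a clean way of organizing the check that the paper leaves implicit under ``easily verified,'' though note the general criterion only gives sufficiency of these conditions, not the ``precisely when'' you state (which is all that is needed here anyway).
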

\begin{proof} 
From the general theory of twisted tensor product algebras, in order to prove that the multiplication $m_\theta$ is associative it is enough to show that the normal twist $\Psi$ satisfies the following two conditions (see e.g. \cite{cae} and the Appendix):
\begin{eqnarray}
&&(\id_\aa\ot m_\I)(\Psi \ot \id_\I)(\id_\I \ot \Psi) =\Psi (m_\I \ot \id_\aa )
\, , \label{P1}
\\
&& (m_\aa \ot \id_\I)(\id_\aa\ot\Psi)(\Psi \ot \id_\aa) =\Psi (\id_\I \ot m_\aa )\,  .\label{P2}
\end{eqnarray}
These are easily verified  by using the explicit form of the twist as given in \eqref{twist} above. 
Finally, $1 \ot 1$ is the unit in $\ai$ from general results of the theory of normal twists. 
\end{proof}
\begin{lem}
The map $\Psi'=\Psi \circ \tau$ is $*$-compatible. Then  $\ai$ is a $*$-algebra with involution 
 $$ *_\Psi (x \ot j) := (x \ot j)^{*_\Psi}:=  \Psi(j^* \ot x^*), \forall x \in \aa, j\in \I \, .$$ 
\end{lem}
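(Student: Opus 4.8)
The plan is to reduce the statement to the general criterion for $*$-structures on twisted tensor products recorded in Appendix~\ref{se:app}: that criterion says that once the flipped twist $\Psi' = \Psi \circ \tau$ is $*$-compatible, the formula $(x \ot j)^{*_\Psi} = \Psi(j^* \ot x^*)$ automatically defines an involution turning $\ai$ into a $*$-algebra. Hence the whole content lies in checking $*$-compatibility for our explicit $\Psi$, and that is where I would put the work. Everything is made transparent by the fact that $\Psi$ is homogeneous.

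Concretely, I would first equip $\aa = \aa(SU(2))$ with the $\IZ$-grading (a $U(1)$-weight) assigning weight $+1$ to $\x{23}, \x{24}$ and weight $-1$ to $\x{23}^*, \x{24}^*$; commutativity and the defining relation $\x{23}^*\x{23} + \x{24}^*\x{24}=1$ are weight-homogeneous, so this is well defined, and it is exactly the grading encoded in \eqref{twist}. In this language \eqref{twist} reads $\Psi(r^{\pm n} \ot w) = q^{\pm n\,\mathrm{wt}(w)}\, w \ot r^{\pm n}$ on weight-homogeneous $w$, so that $\Psi'$ acts diagonally, $\Psi'(w \ot r^m) = q^{m\,\mathrm{wt}(w)}\, w \ot r^m$. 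The three structural facts I would isolate are: (i) the involution reverses weights, $\mathrm{wt}(w^*) = -\mathrm{wt}(w)$, immediate from \eqref{*beta} and \eqref{gensu2}; (ii) $r$ is self-adjoint, so $(r^m)^* = r^m$; and (iii) $q \in \IR$, so all powers of $q$ are real.

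With these facts the verification becomes a short bookkeeping on the basis $\{w \ot r^m\}$, with $w$ weight-homogeneous in $\aa$ and $m \in \IZ$, extended antilinearly. Antilinearity of $*_\Psi$ is free, since $\Psi$ is linear and the two involutions are antilinear. For the involutive property one computes $*_\Psi(w\ot r^m) = \Psi(r^m \ot w^*) = q^{-m\,\mathrm{wt}(w)}\, w^* \ot r^m$ by (i)--(ii); applying $*_\Psi$ again contributes a real factor $q^{-m\,\mathrm{wt}(w)}$ through antilinearity (by (iii)) and a factor $q^{+m\,\mathrm{wt}(w)}$ from the diagonal action, and the two cancel to give $(*_\Psi)^2 = \id$. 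The anti-multiplicativity $\big((w \ot r^m)(w' \ot r^{m'})\big)^{*_\Psi} = (w' \ot r^{m'})^{*_\Psi}(w \ot r^m)^{*_\Psi}$ is the only identity that is not immediate: both sides are scalar multiples of $(ww')^* \ot r^{m+m'}$, and one must check that the total exponent of $q$ agrees. I expect this exponent-matching to be the single real obstacle; it is in fact precisely the content of $*$-compatibility of $\Psi'$, and the reason it succeeds is structural rather than accidental --- the multiplication of $\aa$ respects the grading ($\mathrm{wt}(ww') = \mathrm{wt}(w)+\mathrm{wt}(w')$), $\Psi$ implements that grading through the real scalars $q^{m\,\mathrm{wt}}$, and the involution merely flips the sign of the weight. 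Finally, since $1 \ot 1$ has weight $0$ it is self-adjoint, so the unit needs no separate treatment, and the appendix criterion then delivers the claim.
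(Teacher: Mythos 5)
Your proof is correct and takes essentially the same route as the paper, whose entire argument is ``by direct check and direct application of Prop.~\ref{prop:*}'': one verifies the $*$-compatibility $*_\ot = \Psi'\circ *_\ot\circ\Psi'$ for $\Psi'=\Psi\circ\tau$ and then invokes the appendix criterion. Your weight-grading bookkeeping (with $\mathrm{wt}(w^*)=-\mathrm{wt}(w)$, $r^*=r$, $q\in\IR$) is just a cleanly organized version of that direct check.
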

\begin{proof}
By direct check and  direct application of Prop. \ref{prop:*}. 
\end{proof}
\noindent
On the generators of $\ai$ the $*$-structure  reads
\be\label{*AI}
(\x{23} \ot r^{\pm 1})^{*_\Psi}= q^{\mp 1} \x{23}^* \ot r^{\pm1} \quad ; \quad
(\x{24} \ot r^{\pm 1})^{*_\Psi}= q^{\mp 1} \x{24}^* \ot r^{\pm1} \, .
\ee

\begin{prop}\label{prop:fact}
The map $f_{SN}:\sqrsn \rightarrow \ai$  defined on the generators as
\begin{equation}
\label{deff}
\fsn (\b{23}) = x_{23} \ot r \; , \quad \fsn (\b{24}) = x_{24}\ot r \; , \quad \fsn (r^{-1}) =1 \ot r^{-1}
\end{equation}
and extended as a $*$-algebra morphism is an isomorphism of unital $*$-algebras.
\end{prop}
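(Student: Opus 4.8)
The plan is to establish three things: that $\fsn$ is a well-defined $*$-algebra homomorphism, that it is surjective, and that it is injective; only the last point requires genuine care. Since $\sqrsn$ is presented by generators and relations, I would first verify that the proposed images satisfy the defining relations \eqref{sqrsncr} and \eqref{sqrsnrel} inside $\ai$. Using the explicit twist on generators \eqref{twist-gen}, one finds $(\x{23}\ot r)(1\ot r^{-1}) = \x{23}\ot 1$ while $(1\ot r^{-1})(\x{23}\ot r) = q^{-1}\x{23}\ot 1$, so the relation $\b{23} r^{-1} = q\, r^{-1}\b{23}$ is respected; the other relations in \eqref{sqrsncr} follow identically. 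Because $\fsn$ is extended as a $*$-morphism, the $*$-structure \eqref{*beta} forces $\fsn(\b{13}) = q\,\fsn(\b{24})^{*_\Psi}$ and $\fsn(\b{14}) = -q\,\fsn(\b{23})^{*_\Psi}$; evaluating with \eqref{*AI} gives $\fsn(\b{13}) = \x{24}^*\ot r$ and $\fsn(\b{14}) = -\x{23}^*\ot r$. With these values, a short computation using the sphere relation of Prop.~\ref{prop:su2beta} yields $\fsn(\b{13}\b{24}-\b{14}\b{23}) = q(1\ot r^2)$, so that the constraint \eqref{sqrsnrel} is sent to $q(1\ot 1)$, as required. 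This shows $\fsn$ is a well-defined homomorphism, and $*$-compatibility is built in by construction.

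For surjectivity I would observe that the image already contains a generating set. Indeed the computation above gives $\fsn(\x{23}) = \fsn(\b{23}r^{-1}) = \x{23}\ot 1$, and likewise $\fsn(\x{24}) = \x{24}\ot 1$ together with their adjoints; moreover $\fsn(r^{-1}) = 1\ot r^{-1}$ is invertible in $\ai$ with inverse $1\ot r$, whence $\fsn(r) = 1\ot r$. Since $x\ot j = (x\ot 1)(1\ot j)$ for all $x\in\aa$, $j\in\I$ by normality of $\Psi$, the elements $\x{23}\ot 1$, $\x{24}\ot 1$, their adjoints, and $1\ot r^{\pm 1}$ generate $\ai$, so $\fsn$ is onto.

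The hard part is injectivity, i.e.\ excluding hidden relations that would collapse $\sqrsn$ below $\ai$. The cleanest route is to construct an explicit inverse: using the identification $\aa\xrightarrow{\sim}\A\subset\sqrsn$ of Prop.~\ref{prop:su2beta}--\ref{hopf} and the inclusion $\I\hookrightarrow\sqrsn$, $r^{\pm1}\mapsto r^{\pm1}$, define $g:\ai\to\sqrsn$ on the vector space $\aa\ot\I$ by $g(u\ot r^{k}) := u\, r^{k}$ (reading $u\in\A$). The key point is that $g$ respects the twisted multiplication $m_\theta$ precisely because the $q$-powers appearing in the twist \eqref{twist} are exactly those produced when commuting $r$ past the $\x{ij}$ inside $\sqrsn$, which are in turn dictated by \eqref{sqrsncr}; verifying this compatibility on generators is the technical heart of the matter. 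Once $g$ is known to be a homomorphism, checking $g\circ\fsn=\id$ and $\fsn\circ g=\id$ on generators is immediate, giving the isomorphism.

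Equivalently, and perhaps more transparently, one may argue by bases: using \eqref{sqrsncr} every element of $\sqrsn$ can be brought to the normal form $\sum \x{23}^{a}(\x{23}^*)^{b}\x{24}^{c}(\x{24}^*)^{d}\, r^{k}$ modulo the single sphere relation, these monomials descending to the standard basis of $\aa(SU(2))$, and $\fsn$ carries them bijectively, up to nonzero scalars from $\Psi$, onto the basis $\{\x{23}^{a}(\x{23}^*)^{b}\x{24}^{c}(\x{24}^*)^{d}\ot r^{k}\}$ of $\aa\ot\I$. In either approach the genuine obstacle is the same: one must be certain that \eqref{sqrsncr} together with the single constraint \eqref{sqrsnrel} exhaust the relations of $\sqrsn$, so that the normal form, equivalently the well-definedness of $g$, is unobstructed.
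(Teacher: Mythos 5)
Your proposal is correct and its core --- verifying that $\fsn$ preserves the defining relations of $\sqrsn$ using the explicit twist and the induced values $\fsn(\b{13})=\x{24}^*\ot r$, $\fsn(\b{14})=-\x{23}^*\ot r$ --- is exactly the computation the paper carries out (the paper checks \eqref{sqrsnrel} in the equivalent form $r^{-2}(\b{24}^*\b{24}+\b{23}^*\b{23})=1$ and omits the analogous checks). The paper stops there and treats bijectivity as evident, so your additional arguments for surjectivity and for injectivity via the inverse $u\ot r^{k}\mapsto u\,r^{k}$ are sound extra detail rather than a different route.
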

\begin{proof}
We only have to prove that the map $\fsn$ preserves the commutation relations \eqref{sqrsncr} among the generators and  equation \eqref{sqrsnrel}. We verify the latter; the other identities are proved in a similar way and we omit to transcribe the proof. We have
\begin{eqnarray*}
\fsn(r^{-2}) \fsn (\b{24}^*\b{24} + \b{23}^*\b{23}) &=& (1 \ot r^{-2}) \left[ q^{-1} (\x{24}^* \ot r)   (\x{24} \ot r)+
q^{-1} (\x{23}^* \ot r)   (\x{23} \ot r) \right]  
\\ &=& q \left[ (\x{24}^* \ot r^{-1})   (\x{24} \ot r)+
(\x{23}^* \ot r^{-1})   (\x{23} \ot r) \right] \\ &=&    (\x{23}^* \x{23} + \x{24}^* \x{24})  \ot 1 = 1 \ot 1 \, 
\end{eqnarray*}
where we made use of \eqref{*AI}. This also proves that $\fsn$ preserves the units. 
\end{proof}
\medskip
\begin{rem} 
It is possible to get a commutative subalgebra in the intersection of the two charts also without rescaling by $r^{-1}$. Consider the subalgebra $\aa'\subset\rns$ generated by 
\be
a_{13}:= \a{13} \; , \quad 
a_{14}:= \a{14} \; , \quad 
a_{23}:=  q^{-1} \ali\a{23} \; , \quad 
a_{24}:=  q \a{24} \ali \; .
\ee
We have the identity
\be\label{eq_det}
a_{24} a_{13}-a_{23}a_{14} =1
\ee
and $\aa'$ is commutative. Therefore $\aa'$ can be endowed with a Hopf algebra structure as done for $\aa$ in Proposition \ref{hopf}, thus obtaining the coordinate Hopf algebra of $SL(2)$. But note that $\aa'$ is not a $\,*$-subalgebra of $\rns$: from \eqref{*alpha}  we have
\be
a_{23}^*= -q^{-4} a_{14} \ali \quad , \quad a_{24}^*= q^{-2}\ali a_{13} 
\ee
and the identity \eqref{eq_det} becomes 
\be\label{3-sphere}
q^2 a_{23}^* a_{23} + q^2 a_{24}^* a_{24} = \ali
\ee
hence describing the equation of a three-sphere with invertible but non central radius. To recover the coordinate Hopf algebra of $SU(2)$ we would have to introduce on $\aa'$ a different (i.e. not inherited from $\rns$) $\, *$-structure, namely
$$
\bar{a}_{24} :=a_{13} \; , \quad \bar{a}_{23} :=a_{14} \, .
$$
\end{rem}
\bigskip

We can, of course, obtain analogous results by using the algebra $\rns$. We omit the proofs. 
\begin{defi}
We denote by $\sqrns$ the $*$-algebra generated by $\{ \a{13},\a{14},\a{23},\a{24}, r \}$, together with commutation relations
\begin{equation}
\label{sqrnscr}
\a{13} r = q ~r \a{13} \; , \quad \a{14} r = q~ r \a{14} \; , \quad
\a{23} r = q^{-1} ~r \a{23} \; , \quad \a{24} r = q^{-1} r \a{24} \; , \quad
\end{equation} 
and satisfying
\begin{equation}
\label{sqrnsrel}
r^{2}(\a{24}\a{13} - \a{23}\a{14}) = q = (\a{24}\a{13} - \a{23}\a{14}) r^{2} \, .
\end{equation}
The $*$-structure is the one given in \eqref{*alpha}, together with $r^*=r$.
\end{defi}

\noindent
The inverse of $r^{2}$ in $\sqrns$, denoted $r^{-2}$, is computed from \eqref{sqrsnrel}:  
\begin{equation}
\label{r-2}
r^{-2}:=q^{-1}(\a{24}\a{13} -\a{23}\a{14}) \, .
\end{equation}
Also $r$ is invertible, with ${r}^{-1}= r^{-2} r = r r^{-2}$.  The map $\Q$ (see \eqref{mapQ} and
Corollary \ref{cor:G}) induces a $\, *$-algebra isomorphism $\widetilde{\Q}:\sqrns\rightarrow\sqrsn$ with $\widetilde{\Q}(r^{-1})=  r$.

\begin{prop}\label{prop:su2alpha}
The subalgebra of $\sqrns$ generated by
\be
\label{gensu2y}
\y{23}:= q~r~ \a{23} \, , \quad \y{24}:=q~r~\a{24} \, , \quad \y{23}^*= q \a{23}^* r \, , 
\quad \y{24}^* =q \a{24}^*r  
\ee
is commutative and coincides with $\widetilde{\Q}^{-1}(\A)$. Moreover, the following sphere relation holds:
$$ \y{23}^* \y{23} + \y{24}^* \y{24} = 1 \, . $$
\end{prop}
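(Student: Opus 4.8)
The plan is to deduce everything by transport of structure along the $*$-algebra isomorphism $\widetilde{\Q}:\sqrns\to\sqrsn$ introduced just above, rather than to reprove commutativity and the sphere relation from scratch. The real content of the statement is that $\widetilde{\Q}$ carries the generators $\y{ij}$ of the $\y$-subalgebra \emph{exactly} onto the generators $\x{ij}$ of $\A$. Once this is checked, all three assertions (that the subalgebra equals $\widetilde{\Q}^{-1}(\A)$, that it is commutative, and that it satisfies the sphere relation) follow at once from Proposition \ref{prop:su2beta}, since $\widetilde{\Q}$ is a unital $*$-isomorphism.

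First I would compute $\widetilde{\Q}(\y{23})$ and $\widetilde{\Q}(\y{24})$. Recalling that $\widetilde{\Q}$ is induced by $\Q$, so that $\widetilde{\Q}(\a{23})=q^2\b{23}$, $\widetilde{\Q}(\a{24})=q^2\b{24}$ and $\widetilde{\Q}(q)=q^{-1}$, together with $\widetilde{\Q}(r)=r^{-1}$ (the inverse of $\widetilde{\Q}(r^{-1})=r$), one gets
$$\widetilde{\Q}(\y{23})=\widetilde{\Q}(q\,r\,\a{23})=q^{-1}\,r^{-1}\,q^2\,\b{23}=q\,r^{-1}\b{23}.$$
Using the commutation relation \eqref{sqrsncr} in $\sqrsn$, namely $r^{-1}\b{23}=q^{-1}\b{23}r^{-1}$, this collapses to $\b{23}r^{-1}=\x{23}$, and the identical computation gives $\widetilde{\Q}(\y{24})=\x{24}$. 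Applying the involution (and using that $\widetilde{\Q}$ is a $*$-map) yields $\widetilde{\Q}(\y{23}^*)=\x{23}^*$ and $\widetilde{\Q}(\y{24}^*)=\x{24}^*$. Hence $\widetilde{\Q}$ maps the $*$-subalgebra generated by the $\y{ij}$ bijectively onto $\A$, i.e. the $\y$-subalgebra is precisely $\widetilde{\Q}^{-1}(\A)$.

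With this identification the remaining claims are immediate: commutativity of the $\y$-subalgebra follows from commutativity of $\A$, and applying $\widetilde{\Q}^{-1}$ to the relation $\x{23}^*\x{23}+\x{24}^*\x{24}=1$ gives $\y{23}^*\y{23}+\y{24}^*\y{24}=1$ since $\widetilde{\Q}^{-1}(1)=1$. As a self-contained cross-check one can instead verify the sphere relation directly: using \eqref{sqrnscr} to move $r$ past the $\a{ij}$ one finds $\y{23}^*\y{23}+\y{24}^*\y{24}=q^4(\a{23}^*\a{23}+\a{24}^*\a{24})\,r^2$, and since $\alpha=q^4(\a{23}^*\a{23}+\a{24}^*\a{24})$ equals $r^{-2}$ in $\sqrns$ (compare \eqref{roi} with \eqref{r-2}), the right-hand side is $r^{-2}r^2=1$. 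The only genuine care needed is the bookkeeping of the powers of $q$ under $\widetilde{\Q}$ (because of $\widetilde{\Q}(q)=q^{-1}$) and the systematic reordering of $r$ against the $\b{ij}$ (resp. $\a{ij}$) via the commutation relations; this is the one place where an exponent slip could occur, but it is entirely mechanical.
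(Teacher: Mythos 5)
Your argument is correct. Note that the paper itself omits the proof of this proposition, stating only that the results are analogous to those obtained for $\sqrsn$; your transport-of-structure argument along $\widetilde{\Q}$ is the natural completion of that remark, and the key computation
$\widetilde{\Q}(\y{23}) = q^{-1}\,r^{-1}\cdot q^{2}\b{23} = q\,r^{-1}\b{23} = \b{23}r^{-1} = \x{23}$
(and likewise for $\y{24}$ and the conjugates) is carried out correctly, including the two points where a slip could occur, namely $\widetilde{\Q}(q)=q^{-1}$ acting on scalar coefficients and the reordering $r^{-1}\b{23}=q^{-1}\b{23}r^{-1}$ from \eqref{sqrsncr}. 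Your independent cross-check of the sphere relation, using \eqref{sqrnscr} to obtain $\y{23}^{*}\y{23}+\y{24}^{*}\y{24}=q^{4}(\a{23}^{*}\a{23}+\a{24}^{*}\a{24})r^{2}=r^{-2}r^{2}=1$, is also correct and mirrors the direct-computation style of the proof of Proposition \ref{prop:su2beta}.
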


\begin{prop}\label{prop:facty}
The map $\fns:\sqrns \rightarrow \ai$ defined on the generators as
\begin{equation}
\label{deffy}
\fns(\a{23}) = q^{-2} x_{23} \ot r^{-1} \; , \quad \fns(\a{24}) = q^{-2} x_{24}\ot r^{-1} \; , \quad \fns(r) =1 \ot r
\end{equation}
and extended as a $*$-algebra morphism is an isomorphism of unital $*$-algebras.
\end{prop}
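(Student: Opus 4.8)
The plan is to mirror the proof of Proposition \ref{prop:fact}. Since $\fns$ is declared a $*$-algebra morphism, the only things to establish are that it is well defined, i.e. that it respects the defining relations \eqref{sqrnscr} and \eqref{sqrnsrel} and is compatible with the $*$-structure \eqref{*alpha}, and that it is bijective. I would begin by recording the action of $\fns$ on the two generators $\a{13},\a{14}$ not listed in \eqref{deffy}; these are fixed by the $*$-morphism requirement together with the relations $\a{24}^*=q^{-3}\a{13}$ and $\a{23}^*=-q^{-3}\a{14}$ of \eqref{*alpha}. Using the involution \eqref{*AI} of $\ai$ one finds $\fns(\a{13})=q^{2}\,\x{24}^*\ot r^{-1}$ and $\fns(\a{14})=-q^{2}\,\x{23}^*\ot r^{-1}$.

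Next I would check the commutation relations \eqref{sqrnscr}. They occur in $*_\Psi$-conjugate pairs, so it suffices to verify the two involving $\a{23},\a{24}$, the ones for $\a{13},\a{14}$ following by applying $*_\Psi$ and the $*$-structure. Each is a one-line product in $\ai$ evaluated through the explicit twist \eqref{twist-gen}: for instance $\fns(\a{23})\fns(r)=q^{-2}\,\x{23}\ot 1$ while $\fns(r)\fns(\a{23})=q^{-1}\,\x{23}\ot 1$, which reproduces $\a{23}r=q^{-1}r\a{23}$. The core of the well-definedness is the sphere relation \eqref{sqrnsrel}, equivalently $\a{24}\a{13}-\a{23}\a{14}=q\,r^{-2}$ with $r^{-2}$ as in \eqref{r-2}. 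Applying $\fns$ to the left-hand side and evaluating the products in $\ai$ by means of \eqref{twist-gen} and \eqref{*AI}, the powers of $q$ collapse and one obtains $q\,(\x{24}\x{24}^*+\x{23}\x{23}^*)\ot r^{-2}$; since $\A$ is commutative with $\det A=\x{23}\x{23}^*+\x{24}\x{24}^*=1$ (cf. \eqref{matrixA}), this equals $q\,(1\ot r^{-2})=\fns(q\,r^{-2})$, as required.

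For bijectivity I would exhibit the inverse on generators. Writing $\y{23}=q\,r\,\a{23}$ and $\y{24}=q\,r\,\a{24}$ as in Proposition \ref{prop:su2alpha}, a short computation gives $\fns(\y{23})=\x{23}\ot 1$, $\fns(\y{24})=\x{24}\ot 1$ together with their $*$-images and $\fns(r)=1\ot r$. Thus $\fns$ carries the generating set $\{\y{23},\y{24},\y{23}^*,\y{24}^*,r,r^{-1}\}$ of $\sqrns$ onto the generating set $\{\x{23}\ot 1,\x{24}\ot 1,\x{23}^*\ot 1,\x{24}^*\ot 1,1\ot r,1\ot r^{-1}\}$ of $\ai$. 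I would then conclude exactly as for Proposition \ref{prop:fact}: the assignment sending these generators of $\ai$ back to the corresponding generators of $\sqrns$ is readily checked to be a well-defined $*$-morphism and a two-sided inverse of $\fns$ on generators, hence $\fns$ is a $*$-isomorphism.

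The step I expect to be the main obstacle is the $q$-power bookkeeping in the sphere relation, where the interval twist \eqref{twist-gen} and the deformed involution \eqref{*AI} interact. The prefactor $q^{-2}$ appearing in the definition \eqref{deffy} of $\fns$, which is absent in the definition \eqref{deff} of $\fsn$, is precisely calibrated so that both the $*$-compatibility and this relation come out right, so keeping these exponents consistent throughout is where care is required rather than any structural difficulty.
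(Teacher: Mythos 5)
Your proposal is correct and takes essentially the same route as the paper, which omits the proof of Proposition \ref{prop:facty} precisely because it is the mirror image of the proof of Proposition \ref{prop:fact}: one checks that the defining relations \eqref{sqrnscr}, \eqref{sqrnsrel} and the $*$-structure are preserved, the sphere relation reducing to $\det A=\x{23}\x{23}^*+\x{24}\x{24}^*=1$ after the twist \eqref{twist-gen} and the involution \eqref{*AI} are applied. Your computations (in particular $\fns(\a{13})=q^{2}\x{24}^*\ot r^{-1}$, $\fns(\a{14})=-q^{2}\x{23}^*\ot r^{-1}$, and $\fns(\a{24}\a{13}-\a{23}\a{14})=q(1\ot r^{-2})$) are accurate, and your explicit treatment of bijectivity via $\fns(\y{ij})=\x{ij}\ot 1$ is a small but welcome addition to what the paper transcribes.
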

\noindent
We conclude by noting that from the above results the diagram of $*$-algebra isomorphisms  below is commutative:
\be
\xymatrix{
\sqrns \ar[rr]^{\widetilde{\Q}} \ar[dr]_{\fns} & & \sqrsn \ar[dl]^{\fsn}  \\
& \ai & }
\ee

\subsection{The structure sheaf $\mathcal{\O}_{S^4_q}$ and the differential calculus sheaf $\dfs$}
%\label{sect:sheafB}

In classical geometry spaces can be characterized by their structure sheaf. More precisely, a 
topological space $M$ together with a sheaf of commutative rings $\O_M$ on $M$, referred to as the structure sheaf of $M$, form a \textit{ringed space} $(M,\O_M)$. By specifying a local model for the sheaf $\O_M$ we recover different geometrical notions. Consider for example the ringed spaces $(\M{R}^n,\mathcal{C})$ and $(\M{R}^n,\mathcal{C}^m)$, where $\mathcal{C}$ is the  sheaf of continuous functions on $\M{R}^n$, $n\in\M{N}$, and $\mathcal{C}^m$, $m\in\M{N}\cup \{\infty\}$, is the sheaf of $m$-times differentiable functions on $\M{R}^n$. Then, if $(M,\O_M)$ is locally isomorphic as a ringed space to $(\M{R}^n,\mathcal{C})$ (resp. $(\M{R}^n,\mathcal{C}^m)$) we say that $M$ is a topological manifold (resp. differentiable $m$-manifold) of dimension $n$. Further different choices of the local model characterize analytical manifolds, complex manifolds, schemes \cite[Ex. 2.4]{pflaum}. Following \cite{pflaum}, we call a \textit{quantum space} over $M$ a ringed space $(M,\O_M)$ where $\O_M$ is now a sheaf of (not necessarily commutative) algebras. In this framework the `local charts' of the noncommutative algebra $\sph$ naturally define a quantum space over the classical $4$-sphere $S^4$, as we are going to show.

Let us consider the topology of $S^4$ whose basis consists of the following sets: $U_N:=S^4 \backslash \{NP\}$, $U_S:=S^4 \backslash \{SP\}$ and their intersection $U_{SN}:=S^4 \backslash \{NP, SP\}$, where $NP$ and $SP$ denote the North and South poles respectively. 
We construct a sheaf $\shs$ of noncommutative $*$-algebras on  $S^4$ by the assignment

\begin{equation}\label{sheafB}
\shs(U_N) := \rn \, , \qquad
\shs(U_S) := \rs \, , \qquad
\shs(U_{SN}) := \sqrsn \simeq  \ai
\end{equation}
together with restriction maps
\begin{eqnarray} \label{restr}
\rho_{N,SN} & :   \rn  \ra  \sqrsn \, , \qquad \beta_{ij}& \mapsto \beta_{ij}\nn \\
\rho_{S,SN} & :   \rs  \ra  \sqrsn \, , \qquad \alpha_{ij}&\mapsto   \widetilde{\Q}(\a{ij})
\end{eqnarray}
and identities otherwise. We observe that $\shs$ is not a flabby sheaf, i.e. its restriction maps are not surjective. By standard arguments in sheaf theory, $\shs$ is defined on the whole topology of $S^4$; in particular 
\begin{eqnarray}\label{restrB}
\shs(S^4)&=& \shs(U_N \cup U_S) \nn \\&=& \{(a_N,a_S,a_{SN}) \in \shs(U_N) \oplus \shs(U_S) \oplus \shs(U_{SN})~|~  \rho_{N,SN}(a_N)=
\rho_{S,SN}(a_S)=a_{SN}
 \} \nn
\\ \label{glsshs}
&\simeq&
\{(a_N,a_S) \in \shs(U_N) \oplus \shs(U_S) ~|~  \rho_{N,SN}(a_N)=
\rho_{S,SN}(a_S)
 \}.
\end{eqnarray}

\begin{defi}
The quantum space of the noncommutative 4-sphere $S^4_q$ is the sheaf of noncommutative algebras $\shs$ over the classical 4-sphere $S^4$. 
\end{defi}

\begin{rem}
\label{commQ}
The isomorphism $\Q$ of Proposition \ref{prop:G} ensures that the quantum space $S^4_q$ has  the same quantum deformation $\M{R}^4_q:=\!\!\rn\simeq\!\!\rs$ of $\M{R}^4$ as local model, i.e. the quantum sphere is locally isomorphic to (isomorphic copies of) $\M{R}^4_q$. Quantum spaces generally lack a unique local model in the sense that they may host non-isomorphic noncommutative deformations in different patches. For example the twistor bundle over $S^4_q$  constructed in \cite[\S 4.6]{clsII} has local trivializations to $\M{R}^4_q\otimes_q\M{CP}^1_q$ and to $\M{R}^4_q\otimes \M{CP}^1$ depending on the two patches.   
\end{rem}\medskip

We now describe the sheaf $\dfs$ of differential forms on
$S^4_q$. In classical geometry differential forms admit a natural description in terms of sheaves, and so do their noncommutative deformations.  The noncommutative differential calculus of $S^4_q$ was originally constructed in \cite[\S 6.2]{clsII} in terms of forms defined on the two local patches $\rn$ and $\rs$ of the quantum sphere and on their intersection. The calculus was obtained via Drinfeld twist deformation of the canonical differential calculi on $\IR^4$ by using the 2-cocycle which controls the noncommutativity of $S^4_q$ (see the Introduction). Here we describe the noncommutative differential calculus via the data of a sheaf $\dfs$ of graded noncommutative algebras. We  state the relevant noncommutative relations among the generators while omitting any reference to the Drinfeld twist behind them. 

Let us start with the open set $U_N\subset S^4$. The structure sheaf reads $\shs(U_N)=\rn$
and the algebra $\rn$ admits a canonical noncommutative differential $*$-calculus $\ndf=\wedge^{\bullet}_q {_N\Omega^1_q}$. The latter is a differential graded $*$-algebra whose degree zero part coincides with $\rn$ and whose degree one part has generators $\{ \d\b{23}, \, (\d\b{23})^*, \, \d\b{24}, \, (\d\b{24})^* \, \}$. The differential $\d:\rn\rightarrow {_N\Omega^1_q}$ is defined as
$$ \d(\b{23}) = \d\b{23} \, , \quad \d(\b{23}^*)=(\d\b{23})^*  \, , \quad
 \d(\b{24})= \d\b{24} \, , \quad  \d(\b{24}^*)= (\d\b{24})^* $$
and uniquely extended to a degree one $*$-map $\d:{_N\Omega^m_q}\rightarrow {_N\Omega^{m+1}_q}$ by requiring it to be  $\mathbb{C}$-linear and to satisfy $\d^2=0$ together with the Leibniz rule
\begin{equation}
\label{qleib}
\d (\omega \wedge_q \omega') = \d\omega \wedge_q\omega' + (-1)^{|\omega|} \omega \wedge_q \d\omega'
\, .
\end{equation}
Since there is no risk of confusion, we will simply write $\d\b{23}^*$, resp. $\d\b{24}^* $ to indicate
$(\d\b{23})^*$, resp.  $(\d\b{24})^*$. To simplify further the notation we often drop the symbol $\wedge_q$ in formulas.

The commutation relations in $\ndf$ are completely derived from those involving only elements of degree zero and one.  In addition to the relations in $\rn$ we have
\begin{align*}
\b{23}\d\b{23} & = \d\b{23}\b{23} \, ,& \b{23}(\d\b{23})^* & = q^{-2} (\d\b{23})^* \b{23} \, ,& 
\b{23}\d\b{24} & = \d\b{24}\b{23} \, ,& \b{23}(\d\b{24})^* & = q^{-2} (\d\b{24})^* \b{23} \, ,\\
\b{24}\d\b{24} & = \d\b{24}\b{24} \, ,& \b{24}(\d\b{24})^* & = q^{-2} (\d\b{24})^* \b{24} \, ,& 
\b{24}\d\b{23} & = \d\b{23}\b{24} \, ,& \b{24}(\d\b{23})^* & = q^{-2} (\d\b{23})^* \b{24} \, . 
\end{align*}      
By applying the maps $*$ and $\d$ one gets the remaining relations in degree one and from there the ones in higher degrees. \\

A completely analogous construction is carried out in the second chart $U_S\subset S^4$. The algebra $\rs$ admits a canonical noncommutative differential calculus $\sdf=\wedge_q^{\bullet}{_S\Omega^1_q}$ built out of $_S\Omega^0_q:=\!\!\rs$ and a differential $\d:\!\!\rs\rightarrow {_S\Omega^1_q}$, $\d(\a{ij})\mapsto\d\a{ij}$ which are constructed by following the same prescriptions as above. One can equivalently extend the $*$-isomorphism $\Q:\rs\rightarrow\rn$ to 1-forms by putting $q^{2}\d\a{23}=\Q^{-1}(\d\b{23}):=\d\Q^{-1}(\b{23})$ and so on (the $q$-coefficients come from the definition of $\Q$ in \eqref{mapQ}), hence realizing the commutative diagram
\begin{equation}
\label{cdomega}
\xymatrix{
\rs \ar[r]^{\Q} \ar[d]_{\d} & \rn \ar[d]^{\d} \\
{_S\Omega^1_q} \ar[r]_{\Q} & {_N\Omega^1_q} 
}
\end{equation} 
where ${_S\Omega^1_q}:=\Q^{-1}(_N\Omega^1_q)$. Then the isomorphism extends to the whole differential calculus, $\sdf = \Q^{-1}(\ndf)$.

In the intersection $U_{SN}$ we consider the noncommutative differential calculus $\sndf$ of the algebra $\sqrsn$. Again this can be done by following the standard construction or more directly by requiring that the process of algebraic extension commutes with the differential. This amounts to define $\sndfu$ as the extension of ${_N\Omega^1}_q$ by an extra-generator $\d r^{-1}=(\d r^{-1})^*$, such that the additional commutation relations involving $\d r^{-1}$ are

\begin{align*}
 r\d r^{-1} &= \d r^{-1}r \,  \quad
 &\b{23}\d r^{-1} &= q^2 \d r^{-1}\b{23}
 \, , \quad
 &\b{24}\d r^{-1}&= q^2 \d r^{-1}\b{24} \, , 
\\
\b{23}^*\d r^{-1}  &= q^{-2} \d r^{-1}\d\b{23}^* \, , \quad
&
 \b{24}^*\d r^{-1} &= q^{-2} \d r^{-1}\d\b{24}^* \, , \quad
&& \, 
\end{align*}
together with those obtained from them by applying the maps $\d$ and $*$. Finally, we have the differential counterpart of the sphere relation $r^{-2}(\b{23}^*\b{23}+\b{24}^*\b{24})=1$ (see \eqref{r2}) which reads
\begin{equation}
\label{dr2}
r^{-1}\d r^{-1} + (\d r^{-1}) r^{-1} + (\d\b{23})^*\b{23} + \b{23}^*\d\b{23} + (\d\b{24})^*\b{24} + \b{24}^*\d\b{24} = 0 \, .
\end{equation} \vspace{3pt}

We arrange these noncommutative differential calculi  into a sheaf $\dfs$ on $S^4$ by the assignment
\begin{equation}
\label{dfsheaf}
\dfs(U_N) := {\ndf} \, , \quad 
\dfs(U_S) := {\sdf} \, , \quad
\dfs(U_{SN}) := {\sndf}
\end{equation}
together with restriction maps
\begin{eqnarray}
\rho_{N,SN} & :   {\ndf}  \ra  {\sndf} \, , \qquad \d\b{ij}  & \mapsto \d\b{ij} \nn \\
\rho_{S,SN} & :   {\sdf}  \ra  {\sndf} \, , \qquad \d\a{ij} & \mapsto \widetilde{\Q}(\d\a{ij})=\d(\widetilde{\Q}(\a{ij})) \nn
\end{eqnarray}

\begin{defi} \label{page-Hodge}
The noncommutative differential calculus of the noncommutative 4-sphere $S^4_q$ is the sheaf of noncommutative algebras $\dfs$ over the classical 4-sphere $S^4$. 
\end{defi}

There is a Hodge duality operator on $\Omega^2_{S^4_q}$ which will be important when addressing  the self-duality of the instanton connection in \S \ref{sec:conn} . As discussed in more detail in \cite[\S 5.3]{clsII}, it coincides with the classical Hodge operator on $\Omega^{\bullet}_{S^4}$ since the toric symmetry beyond the 2-cocycle deformation induces a conformal transformation of the classical metric on $S^4$. 

We start on the local chart $U_N$, making use of the classical Hodge duality in $\mathbb{R}^4$. Denote by $x_i$ $(i=0,1,2,3)$ the coordinate functions on $\mathbb{R}^4$. The Hodge duality operator on two forms $\star:\Omega^2(\mathbb{R}^4)\rightarrow\Omega^2(\mathbb{R}^4)$ squares to the identity and provides a decomposition of $\Omega^2(\mathbb{R}^4)$ into self-dual (eigenvalue $+1$) and anti-selfdual (eigenvalue $-1$) differential forms:
$$\Omega^2(\mathbb{R}^4)= \Omega^{2,+}(\mathbb{R}^4)\oplus\Omega^{2,-}(\mathbb{R}^4) \, .$$
As $\A(\mathbb{R}^4)$-left modules the summands are generated by: 
\begin{align}
\Omega^{2,+}(\mathbb{R}^4) & =  \langle \, 
l_1^+=\d x_0\d x_1 + \d x_2\d x_3 \, , \, 
l_2^+=\d x_0\d x_2 - \d x_1\d x_3 \, , \, 
l_3^+=\d x_0\d x_3 + \d x_1\d x_2 \, \rangle \\
\Omega^{2,-}(\mathbb{R}^4) & =  \langle \, 
l_1^-=\d x_0\d x_1 - \d x_2\d x_3  \, , \, 
l_2^-=\d x_0\d x_2 + \d x_1\d x_3  \, , \, 
l_3^-=\d x_0\d x_3 - \d x_1\d x_2  \, \rangle \notag
\end{align}
To match our previous notation with $\rn$ and its generators, it is more convenient to consider complex coordinates
\begin{equation}
\label{betax} 
2\beta_{23} = x_0 + \i x_1 \, , \quad 
2\beta_{23}^{\ast} = x_0 - \i x_1 \, , \quad
2\beta_{24} = x_2 + \i x_3 \, , \quad
2\beta_{24}^{\ast} = x_2 - \i x_3 
\end{equation}
and, in view of our computations on the instanton curvature later on, to generate $\Omega^{2,-}(\mathbb{R}^4)$ with:
\begin{equation}
\label{ASD}
l_1^- = \d \beta_{23}^{\ast}\d \beta_{23} - \d \beta_{24}^{\ast}\beta_{24} \, , \quad  
l_2^- + l_3^- = \d\beta_{23}^{\ast}\d\beta_{24} \, , \quad
l_2^- - l_3^- = \d\beta_{24}^{\ast}\d\beta_{23}  \, .
\end{equation} 
When now the $\beta$ generators satisfy commutation relation \eqref{ncbeta} in $\rn$ (using \eqref{*beta} as well) the above expressions define anti-selfdual two-forms in $\ndf$. A completely analogous construction is performed in the chart $U_S$. By using the restriction maps of $\Omega^{\bullet}_q$ these local assignments define the sheaf (of $\shs$-modules) of anti-selfdual 2-forms $\Omega_q^{2,-}$.

%%%%%%%%%%%%%%%%%%%%%%%%%%%%%%%%%%%%%%%%%%%
\section{The quantum Hopf bundle as a sheaf of Hopf-Galois extensions}\label{sec-Hopf}

We now introduce a quantum principal bundle $\P$ - in the sense of \cite[\S 3]{pflaum} - with structure group $\aa = \aa(SU(2))$ on the quantum 4-sphere $S^4_q$.
We construct it out of a family of linear maps $\tau_{ij}$ (playing the role of `transition functions') from  the structure group $\aa $ to the double intersections of the two charts
$\shs(U_N)=\rn$ and $\shs(U_S)=\rs$. To facilitate the reading, we recall in a later Appendix (App.\ref{app:pflaum}) the formalism developed in  \cite{pflaum}  and the main results
relevant to our discussion.  Our construction is based on \cite[\S 3]{pflaum}, but we are here also concerned with the  condition of Hopf-Galois extension. This is an algebraic analogue of the geometric requirement that  the action in principal bundles is free and proper \cite{sch, bm}. In particular, in Prop. \ref{globalHG} below we prove  that the (global sections of the sheaf of the) total space  is an  $\aa(SU(2))$-Hopf-Galois extension of the quantum sphere $S^4_q$.

We start by recalling some results on Hopf comodule algebras and Hopf-Galois extensions. Let $H$ be a 
cosemisimple Hopf algebra with bijective antipode; if not otherwise stated, from now on we will always work with Hopf algebras of this kind. Let $P$ be a right $H$-comodule algebra, namely there exists a right $H$-coaction $\delta_H:P\rightarrow P\otimes H$, $\delta_H(p)=p_{(0)}\otimes p_{(1)}$, which in addition is an algebra morphism. The space of coinvariants $P^{co(H)}:=\{p\in P \mbox{ s.t. } \delta_H(p)=p\otimes 1\}$ is a subalgebra of $P$. The map $\can:P\otimes_{P^{co(H)}}P\rightarrow P\otimes H$
defined as $p'\otimes p \mapsto p'p_{(0)}\otimes p_{(1)}$ is usually referred to as the canonical map. When $\can$ is bijective we say that the extension $P^{co(H)}\subset P$ is an $H$-Hopf-Galois extension. 
An extension $B:=P^{co(H)}\subset P$ is cleft if and only if it is isomorphic to a crossed product $P\simeq B\#_{\sigma}H$. When the extention $B\subset P$ is Hopf-Galois, cleftness is equivalent to the normal basis property condition: $P\simeq B\tn H$ as left $B$-module and right $H$-comodule, where $B\tn H$ is a left $B$-module by left multiplication on the first factor and an $H$-comodule via $id \ot \Delta$ (see e.g.\cite[Thm. 7.2.2, Thm. 8.2.4]{mont}).
A morphism of $H$-comodule algebras $\phi:P\rightarrow P'$ is an algebra morphism which intertwines the $H$-coactions: $(\phi\otimes\id_H)\delta_H = \delta_{H}'\phi$. The following properties are easily verified: 
\begin{enumerate}[(a)]
\item
 $\phi$ maps coinvariants to coinvariants, $\phi(P^{co(H)})\subset P'^{co(H)}$; 
\item
 $\phi\otimes\phi$ is well-defined on $P\otimes_{P^{co(H)}}P$ and $(\phi\otimes\phi)(P\otimes_{P^{co(H)}}P)\subset P'\otimes_{P'^{co(H)}} P'$; 
\item
 $\phi$ intertwines the canonical maps, in the sense that $\can'(\phi\otimes\phi) = (\phi\otimes\id_H)\can$ on $P\otimes_{P^{co(H)}}P$, and also their lifts $\widetilde{\can}:P\otimes P\rightarrow P\otimes H$ and $\widetilde{\can'}:P'\otimes P'\rightarrow P'\otimes H$. 
\end{enumerate}
Finally, we recall that an extension $P^{co(H)}\subset P$ is Hopf-Galois if and only if it admits a strong connection (see e.g. \cite[\S 2.4]{hkmz} and reference therein). The latter is a unital linear map $l:H\rightarrow P\otimes P$, $l(h)=l(h)^{<1>}\otimes l(h)^{<2>}$, satisfying:
\begin{enumerate}[(i)]
\item $(l\otimes\id_H)\Delta_H = (\id_P\otimes\delta_H)\, l$
\item $(\id_H\otimes l)\Delta_H = (_H\delta\otimes\id_P)\, l$
\item $\widetilde{\can}\circ l = 1_P\otimes\id_H$
\end{enumerate}
where $_H\delta:P\rightarrow H\otimes P$, $_H\delta(p)= S^{-1}(p_{(1)})\otimes p_{(0)}$, is the left $H$-coaction induced from the right coaction $\delta_H$. Given a strong connection $l$, the inverse of the canonical map $\can$ is written as $ \can^{-1}(p\otimes h)= pl(h)^{<1>}\otimes_{P^{co(H)}} l(h)^{<2>}$.
\begin{lem}
\label{Hrestr}
Let $\phi:P\rightarrow P'$ be a morphism of unital right $H$-comodule algebras. If $P$ is an $H$-Hopf-Galois extension, so is $P'$.
\end{lem}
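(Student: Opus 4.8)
The plan is to rely on the strong-connection characterization of Hopf-Galois extensions recalled immediately before the statement, which is far more convenient here than attempting to control the bijectivity of $\can'$ directly. Since $P$ is Hopf-Galois, fix a strong connection $l : H \rightarrow P \otimes P$ satisfying conditions (i)--(iii). I would transport it along $\phi$, setting
\[
l' := (\phi \otimes \phi) \circ l : H \rightarrow P' \otimes P' \, ,
\]
a composition of linear maps and hence automatically well-defined, and then verify that $l'$ is a strong connection for the extension $P'^{co(H)}\subset P'$. By the quoted equivalence this immediately yields that $P'$ is Hopf-Galois.

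The verification splits into the unital condition and the three defining properties, and each follows from a single already-listed feature of $\phi$. Unitality is immediate: $l'(1_H)=(\phi\otimes\phi)(1_P\otimes 1_P)=1_{P'}\otimes 1_{P'}$ since $\phi$ preserves the unit. Condition (iii) for $l'$ is essentially free, since property (c) asserts precisely that $\phi$ intertwines the lifted canonical maps, $\widetilde{\can'}\circ(\phi\otimes\phi)=(\phi\otimes\id_H)\circ\widetilde{\can}$; composing with $l$ and invoking (iii) for $l$ gives $\widetilde{\can'}\circ l'=(\phi\otimes\id_H)(1_P\otimes\id_H)=1_{P'}\otimes\id_H$. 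Conditions (i) and (ii) follow by applying $\phi\otimes\phi\otimes\id_H$, respectively $\id_H\otimes\phi\otimes\phi$, to the corresponding identities for $l$ and using that $\phi$ is a comodule morphism, $(\phi\otimes\id_H)\delta_H=\delta_H'\phi$; one first records that this compatibility also forces $\phi$ to intertwine the induced left coaction, $(\id_H\otimes\phi)\,_H\delta={_H\delta'}\,\phi$, since the passage from $\delta_H$ to $_H\delta$ only reshuffles the tensor legs and applies $S^{-1}$ on the $H$-factor, which commutes with $\phi$.

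The computations are routine once this route is chosen, so the genuine content is the decision to push the strong connection forward covariantly rather than to manipulate $\can^{-1}$ by hand. The only point I would flag as a potential pitfall is that $\phi$ is assumed to be neither injective nor surjective: a naive attempt to induce $\can'^{-1}$ from $\can^{-1}$ would get stuck, because $\phi\otimes_{P^{co(H)}}\phi$ need not surject onto $P'\otimes_{P'^{co(H)}}P'$ and there is no evident mechanism to supply the missing preimages. The strong-connection formulation sidesteps this obstruction entirely, as the mere \emph{existence} of a strong connection on $P'$ certifies the Hopf-Galois property, with no constraint imposed on balanced tensor products.
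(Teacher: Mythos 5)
Your proposal is correct and follows essentially the same route as the paper: both push the strong connection forward as $l':=(\phi\otimes\phi)\circ l$ and verify properties (i)--(iii) using that $\phi$ is a unital $H$-comodule algebra morphism and intertwines the (lifted) canonical maps. The paper phrases the verification as commutativity of two-square diagrams, but the content is identical.
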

\begin{proof}
By hypothesis we have a strong connection $l:H\rightarrow P\otimes P$. We claim that $l':=(\phi\otimes\phi)\, l:H\rightarrow P'\otimes P'$ is a strong connection on $P'$. Property (i) on $l'$ amounts to the commutativity of the diagram
\begin{equation}
\xymatrix @C=4em{
H \ar[r]^l \ar[d]_{\Delta_H} & P\otimes P \ar[r]^{\phi\otimes\phi} \ar[d]^{\id_P\otimes\delta_H} & P'\otimes P' \ar[d]^{\id_{P'}\otimes\delta_{H}'} \\
H\otimes H \ar[r]_{l\otimes\id_H} & P\otimes P\otimes H \ar[r]_{\phi\otimes\phi\otimes\id_H} & P'\otimes P'\otimes H
}
\end{equation} 
which follows from the commutativity of the two sub-diagrams (due resp. to  property (i) on $l$ and to the fact that $\phi$ is an $H$-comodule algebra morphism). Similarly for property (ii). Finally property (iii) on $l'$ is displayed as the commutativity of the diagram
\begin{equation}
\xymatrix @C=4em{
H \ar[r]^l \ar@/_1pc/[dr]_{1_P\otimes\id_H} & P\otimes P \ar[d]^{\widetilde{\can}} \ar[r]^{\phi\otimes\phi} & P'\otimes P' \ar[d]^{\widetilde{\can}'}\\
& P\otimes H \ar[r]_{\phi\otimes\id_H} & P'\otimes H
}
\end{equation} 
which is again a consequence of the commutativity of the two sub-diagrams. 
\end{proof}

\begin{rem}
The geometric counterpart of the previous Lemma is the well known fact that given two $G$-spaces $X$ and $Y$ and a $G$-equivariant morphism $f:X\rightarrow Y$, if the $G$-action on $Y$ is free so is the one on $X$.
\end{rem}

\noindent
We introduce the following definition:
\begin{defi}
\label{HGsh}
Let $X$ be a topological space. Let $\mathcal{F}$ be a sheaf of (not necessarily commutative) algebras over  $X$ and $H$ a Hopf algebra. We say that $\mathcal{F}$ is a sheaf of $H$-Hopf-Galois extensions if: 
\begin{enumerate}[(i)]
\item $\mathcal{F}$ is a sheaf of (say) right $H$-comodules algebras and for each $W \subset U$ the restriction map $\rho_{UW}:\mathcal{F}(U) \ra \mathcal{F}(W)$ is a morphism of $H$-comodule algebras;
\item for each $U\subset X$ open set, $\mathcal{F}(U)^{co(H)}\subseteq \mathcal{F}(U)$ is a Hopf-Galois extension. 
\end{enumerate}
We denote by $\mathcal{F}^{co(H)}$ the sheaf on $X$ which associates to each open set $U$ the  subalgebra of coinvariants $\mathcal{F}(U)^{co(H)}$; we call it the subsheaf of coinvariants.
\end{defi}

Note that by  Lemma \ref{Hrestr} applied to the restriction maps, once $\mathcal{F}(U)$ is an $H$-Hopf-Galois extension, then $\mathcal{F}(W)$ is an $H$-Hopf-Galois extension for any open set $W\subseteq U$. In particular, if the algebra of global sections $\mathcal{F}(X)$ is an $H$-Hopf-Galois extension then  automatically $\mathcal{F}$ is a sheaf of $H$-Hopf-Galois extensions over $X$. We then see that the property of being a Hopf-Galois extension restricts locally. The converse `gluing property' is true for flabby sheaves (i.e. when restriction maps are surjective), as from the general theory of piecewise principality \cite[Thm. 3.3 and Corol. 3.10]{hkmz}. Namely, given a flabby sheaf $\mathcal{F}$ and an open set $U$ with a covering $\{U_i\}_{i\in I}$, if $\mathcal{F}(U_i)$ is an $H$-Hopf-Galois extension for any $i\in I$ then also $\mathcal{F}(U)$ is an $H$-Hopf-Galois extension. In fact in \cite{hkmz} the authors consider principal extensions, which for $H$ cosemisimple are equivalent to Hopf-Galois extensions, and they work with a family of $H$-comodule surjections $\pi_i:P\rightarrow P_i$, $i\in I$, such that $\cap_i (\mathrm{ker}\,\pi_i)=0$. In Theorem 3.3 they show that $P$ is principal if and only if the $P_i$'s are principal. To recover our setting it suffices to let $\pi_i$ be the restriction map $\mathcal{F}(X)\rightarrow\mathcal{F}(U_i)$, as they point out in Corollary 3.10. 

A natural class of examples of sheaves of Hopf-Galois extensions comes from smooth principal bundles.

\begin{ex}
\label{shF} (cf. \cite[Prop. 1.4.]{pflaum})
Let $M$ and $P$ be locally compact topological spaces, $G$ a matrix Lie group and $\pi:P\rightarrow M$ a principal $G$-bundle over $M$ which locally trivializes with respect to some covering $\{U_i\}_{i\in I}$ of $M$,  $\pi^{-1}(U_i)\simeq U_i\times G$. Let $H$ be the Hopf algebra of coordinate functions on $G$ and $\mathcal{M}$ the sheaf of functions on $M$. We can use the local trivialization of $P$ to define a sheaf $\mathcal{F}$ of $H$-Hopf-Galois extensions over $M$ as follows: for each $U_i$ set $\mathcal{F}(U_i)=\mathcal{M}(U_i)\otimes H$. The restriction maps are the restriction maps of $\mathcal{M}$ tensored with the identity on $H$, so they are surjective. Each $\mathcal{F}(U_i)$ is easily seen to be an $H$-Hopf-Galois extension 
(in particular cleft), and by the above discussed gluing property also $\mathcal{F}(M)$, which geometrically corresponds to the algebra of smooth functions on $P$, is an $H$-Hopf-Galois extension (albeit in general not a cleft one). 
\end{ex}

The previous example naturally suggest the following definition. 

\begin{defi}
A sheaf $\mathcal{F}$ of $H$-Hopf-Galois extensions over a topological space $X$ is called locally cleft if there exists an open covering $\{U_i\}_{i\in I}$ of $X$ such that $\mathcal{F}(U_i)$ is cleft, $\forall i\in I$.
\end{defi}

The notions of quantum principal bundle introduced by Pflaum and that of locally cleft sheaf of Hopf-Galois extensions are closely related.

\begin{prop}
\label{pfhg}
A sufficient condition for a quantum principal bundle $\mathcal{P}$ to be a sheaf of Hopf-Galois extensions (in fact, locally cleft) is that $\mathcal{P}$ is a flabby sheaf. In the opposite direction, every locally cleft sheaf of Hopf-Galois extensions is a quantum principal bundle.
\end{prop}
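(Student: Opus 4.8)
The plan is to treat the two implications separately, exploiting throughout that for a cosemisimple Hopf algebra $H$ a trivial extension $\mathcal{B}(U)\otimes H$ is cleft and that cleft extensions are Hopf-Galois, so that the local model underlying Pflaum's quantum principal bundles (Appendix \ref{app:pflaum}) coincides with the local cleft model.

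For the forward implication, let $\mathcal{P}$ be a quantum principal bundle. By definition it carries an open covering $\{U_i\}_{i\in I}$ of $X$ with local trivializations identifying $\mathcal{P}(U_i)$, as a right $H$-comodule algebra, with a trivial bundle $\mathcal{P}(U_i)^{co(H)}\otimes H$. Such a trivial extension is cleft, a cleaving map being $\gamma_i\colon H\to\mathcal{P}(U_i)$, $h\mapsto 1\otimes h$, whose convolution inverse is $h\mapsto 1\otimes S(h)$; being cleft it is in particular an $H$-Hopf-Galois extension, so each $\mathcal{P}(U_i)$ is a cleft Hopf-Galois extension. To show that $\mathcal{P}$ is a sheaf of Hopf-Galois extensions I would invoke the flabbiness hypothesis: applying the gluing property of \cite[Thm. 3.3, Cor. 3.10]{hkmz} to the covering $\{U_i\}$ shows that the algebra of global sections $\mathcal{P}(X)$ is itself an $H$-Hopf-Galois extension, and then, by Lemma \ref{Hrestr} applied to each restriction map $\mathcal{P}(X)\to\mathcal{P}(U)$, every $\mathcal{P}(U)$ is Hopf-Galois. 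Since the restriction maps of $\mathcal{P}$ are by construction morphisms of $H$-comodule algebras, both conditions of Definition \ref{HGsh} are met; as the cleft charts $\{U_i\}$ cover $X$, $\mathcal{P}$ is moreover locally cleft.

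For the converse, let $\mathcal{F}$ be a locally cleft sheaf of $H$-Hopf-Galois extensions, with covering $\{U_i\}$ such that each $\mathcal{F}(U_i)$ is cleft. Because $H$ is cosemisimple and $\mathcal{F}(U_i)^{co(H)}\subset\mathcal{F}(U_i)$ is Hopf-Galois, cleftness is equivalent to the normal basis property, so $\mathcal{F}(U_i)\simeq\mathcal{F}(U_i)^{co(H)}\otimes H$ as left $\mathcal{F}(U_i)^{co(H)}$-modules and right $H$-comodules. These are exactly the local trivializations required by Pflaum's definition, and it remains to recover the transition functions. Denoting by $\gamma_i$ a cleaving map for $\mathcal{F}(U_i)$ and writing $\rho$ for the sheaf restriction to the overlap $U_i\cap U_j$, the maps $\rho\,\gamma_i$ remain cleaving maps (because $\rho$ is an algebra morphism it intertwines convolution), and I would set $\tau_{ij}:=(\rho\,\gamma_i)\ast(\rho\,\gamma_j)^{-1}$, where $\ast$ denotes the convolution product in $\mathrm{Hom}(H,\mathcal{F}(U_i\cap U_j))$. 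A short Sweedler computation, using that $\gamma_i$ is a comodule map while the convolution inverse twists the coaction by the antipode, shows that $\tau_{ij}$ lands in the coinvariant subalgebra $\mathcal{F}(U_i\cap U_j)^{co(H)}$, and associativity of convolution together with $\gamma_i\ast\gamma_i^{-1}=\eta\varepsilon$ forces the cocycle relations $\tau_{ij}\ast\tau_{jk}=\tau_{ik}$ and $\tau_{ii}=\eta\varepsilon$ on the (triple) overlaps. This reproduces the transition-function data of Pflaum's construction and identifies $\mathcal{F}$ with a quantum principal bundle.

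The steps ``trivial $\Rightarrow$ cleft $\Rightarrow$ Hopf-Galois'' and the appeal to the already-quoted gluing theorem are routine. I expect the main obstacle to lie in the converse direction, precisely in checking that the candidate transition functions $\tau_{ij}$ built from the local cleaving maps are coinvariant and satisfy the cocycle identity compatibly with the restriction maps of the sheaf; this is the only point where the interplay between the convolution-invertibility of the cleaving maps and the comodule structure must be handled explicitly rather than merely cited.
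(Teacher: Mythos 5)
Your forward direction is essentially the paper's argument: the local trivializations of Pflaum's definition exhibit each $\mathcal{P}(U_i)$ as a cleft (hence Hopf--Galois) extension, and flabbiness lets you invoke the gluing result of \cite[Thm. 3.3, Cor. 3.10]{hkmz} to get the global extension, after which Lemma \ref{Hrestr} handles all smaller open sets. That part is fine.

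The converse is where you diverge from the paper, and where there is a genuine gap. You build candidate transition functions $\tau_{ij}=(\rho\gamma_i)\ast(\rho\gamma_j)^{-1}$ from the cleaving maps and then appeal to Pflaum's reconstruction theorem. But the reconstruction theorem takes an $H$-cocycle as \emph{input} and produces a \emph{new} total-space sheaf (whose sections over $U$ are compatible tuples $(f_i)$ in $\oplus_i\,\Mm(U_i\cap U)\otimes H$); it does not by itself identify that sheaf with the $\mathcal{F}$ you started from. Establishing that identification requires precisely the local trivialization isomorphisms $\Omega_i$, so your route either leaves the key step unproved or becomes circular. There is a second problem: the explicit reconstruction recipe recalled in Appendix \ref{app:pflaum} assumes the $\tau_{ij}$ are algebra morphisms, and the convolution product of (restricted) cleaving maps and their inverses has no reason to be an algebra morphism for a noncommutative $H$ such as a general $\aa(SU(2))$-type Hopf algebra. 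The paper avoids both issues by arguing directly: it defines $\Omega_i(U):\mathcal{F}^{co(H)}(U)\,\sharp_i\, H\rightarrow\mathcal{F}(U)$, $f\otimes h\mapsto f\gamma_i(h)$, notes that these are isomorphisms of $H$-comodule algebras by the cleft structure theorem \cite[Thm. 7.2.2]{mont}, and checks that the data $(\mathcal{F},\mathcal{F}^{co(H)},\rho,H,(\Omega_i))$ satisfies Pflaum's axioms. You should replace the transition-function detour with this direct verification; the coinvariance and cocycle computations you anticipate as the ``main obstacle'' are then not needed at all, since in Pflaum's framework the transition functions are derived data, recovered from the $\Omega_{ij}=\Omega_i^{-1}\circ\Omega_j$ rather than supplied independently.
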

\begin{proof}
Let us consider a quantum principal bundle $\mathcal{P}$ (with base quantum space $\mathcal{X}$) on $X$. According to \cite[Def. 3.1]{pflaum}, there exists an open covering $\{U_i\}_{I\in I}$ of $X$ and a family of sheaf isomorphisms $\Omega_i:\mathcal{X}(U_i)\sharp_i H \rightarrow \mathcal{P}(U_i)$, hence $\mathcal{P}$ restricts locally to cleft Hopf-Galois extensions. As we have already discussed above, a sufficient condition for these to glue to a well-defined Hopf-Galois sheaf  is that the restriction maps are surjective \cite[Corol. 3.10]{hkmz}.

To prove the second statement, suppose that $\mathcal{P}$ is a locally cleft sheaf of Hopf-Galois extensions over $X$. Then we have an injective sheaf morphism $\rho:\mathcal{P}^{co(H)}\hookrightarrow\mathcal{P}$ and an open covering $\{U_i\}_{i\in I}$ of $X$ such that $\mathcal{P}(U_i)$ is cleft. The $H$-comodule sheaf morphisms $\Omega_i(U):\mathcal{P}^{co(H)}(U)\sharp_i H\rightarrow \mathcal{P}(U)$ defined as $f\otimes h\mapsto f\gamma_i(h)$, where $U\subset U_i$ and $\gamma_i:H\rightarrow\mathcal{P}(U_i)$ is the cleaving map, are in fact isomorphism \cite[Thm. 7.2.2]{mont}. It is now trivial to check that the datum of $(\mathcal{P},\mathcal{P}^{co(H)},\rho,H,(\Omega_i)_{i\in I})$ corresponds to Pflaum's definition of a quantum principal bundle.
\end{proof}
\medskip 
In the remaining of the section we construct a quantum principal bundle $\mathcal{P}$ over the noncommutative 4-sphere $S^4_q$. It mimics the sheaf of the principal $SU(2)$-Hopf bundle, therefore it will be referred to as the quantum Hopf bundle over $S^4_q$. We will see that it is not flabby, nevertheless we prove it to be a sheaf of locally cleft Hopf-Galois extensions. This is an example of how the flabbiness hypothesis in Proposition \ref{pfhg} is a sufficient but not necessary condition. 

We consider  the covering of $S^4$ consisting of the two open sets $U_N,~U_S$ as before and the sheaf $\shs$ introduced in \eqref{sheafB} above. By using Prop. \ref{prop:fact}  (and understanding the isomorphism $\ai \simeq \shs(U_N \cap U_S)$) we introduce 
`transition functions' $\tau_{ij}, ~i,j\in \{N,S\}$  as the linear maps
\begin{align}
\label{trans}
\tnn: \aa &\ra \ai \, , & 
\tss: \aa &\ra \ai \, , &  
\tns: \aa &\ra \ai \, , & 
\tsn: \aa &\ra \ai \\   
h &\mapsto \varepsilon (h)1\otimes 1 \, , &
h &\mapsto \varepsilon (h)1\otimes 1 \, , &
h &\mapsto h \ot 1 \, , & 
h &\mapsto S(h) \ot 1 \nn
\end{align}
for each $h \in \aa$.  It is promptly proved that the maps $\tau_{ij}$ form an $\aa$-cocycle in $\shs$ in the sense of \cite[Def. 3.11]{pflaum}. 
Moreover the  $\tau_{ij},~i,j\in \{N,S\}$  above are algebra morphisms ($\tsn$ as well, despite the presence of the antipode, since $\aa$ is commutative). 
\smallskip

The general theory, see the Appendix \ref{app:pflaum},  gives a recipe for constructing a quantum principal bundle out of a set of transition functions. The sheaf $\P$ of right $\aa$-comodule algebras is defined by setting  

\begin{eqnarray}\label{sheafPN}
\mathcal{P}(U_N)&:=& \left\{(b^N , b^{SN}) \in \left(\shs(U_N) \ot \aa \right) \oplus  \left(\shs(U_S \cap U_N) \ot \aa \right) \right. \nn \\
&& \left. s.t.~ (\rho_{N,SN} \ot id) (b^N)=(m \ot id)(id \ot f_{SN}^{-1} \circ \tau_{NS} \ot id)(id \ot \Delta) (b^{SN}) \right\} \label{condPun}
\end{eqnarray}
and similarly
\begin{eqnarray}\label{sheafPS}
\mathcal{P}(U_S)&:=& \left\{(b^S , b^{SN}) \in \left(\shs(U_S) \ot \aa \right) \oplus  \left(\shs(U_S \cap U_N) \ot \aa \right) \right. \nn \\
&& \left. s.t. ~(\rho_{S,SN} \ot id) (b^S)=(m \ot id)(id \ot f_{SN}^{-1} \circ\tau_{SN} \ot id)(id \ot \Delta) (b^{SN}) \right\}
\end{eqnarray}
while on the intersection $\mathcal{P}$ is simply given by
\be
\mathcal{P}(U_S \cap U_N):= \shs(U_S \cap U_N) \ot \aa \, .
\ee
Finally
\begin{eqnarray}\label{sheafPSN}
\hspace*{-1cm}
\mathcal{P}(S^4)&:=& \left\{(b^N , b^{S}) \in \left(\shs( U_N) \ot \aa \right) \oplus  \left(\shs(U_S) \ot \aa \right)    \right. \nn \\
&& \left. s.t. ~(\rho_{N,SN} \ot id) (b^N)=(m \ot id)(id \ot f_{SN}^{-1} \circ\tau_{NS} \ot id)(id \ot \Delta) (\rho_{S,SN} \ot id)(b^{S}) \right\} \, .
\end{eqnarray}
\noindent
We note that we can constructively find pairs $(b^N,b^{SN})$ belonging to $\mathcal{P}(U_N)$ by defining (see \cite[Lemma 3.13]{pflaum})
$$b^{SN} := (m\tn id)(id\tn f_{SN}^{-1}\circ\tau_{NS}\tn id)(id\tn\Delta)(\rho_{N,SN}\tn id)b^N \, .$$
Hence one defines the trivialization morphism 
\begin{equation}
\label{trivN}
\begin{array}{ccl}
\Omega_N: \shs(U_N)\tn \aa & \rightarrow & \mathcal{P}(U_N) \\
\quad\quad b^N & \mapsto & (b^N,(m\tn id)(id\tn f_{SN}^{-1}\circ\tau_{NS}\tn id)(id\tn\Delta)(\rho_{N,SN}\tn id)\, b^N) \; ,
\end{array}
\end{equation}
which is an  isomorphism of right $\aa$-comodule algebras. The same construction applies to pairs in $\mathcal{P}(U_S)$ and to the trivialization morphism  $\Omega_S: \shs(U_S)\tn \aa \rightarrow \mathcal{P}(U_S)$. 
\smallskip

We already pointed out that the restriction maps $\rho_{N,SN}$ and $\rho_{S,SN}$ of the sheaf $\shs$ are not surjective. As a consequence, the sheaf $\mathcal{P}$ of the quantum Hopf bundle  is not flabby. 
We prove that nevertheless it is a sheaf of Hopf-Galois extensions, locally cleft  due to the trivialization isomorphisms \eqref{trivN}. In view of Lemma \ref{Hrestr} (also see the discussion after Definition \ref{HGsh}) it suffices to show that the global sections are a Hopf-Galois extension.

\begin{prop}
\label{globalHG}
The subalgebra of coinvariants  $
\co:= (\mathcal{P}(S^4))^{co(SU(2))}$  is
\begin{equation}\label{Bcoin}
\co = \{ (x \ot 1, y \ot 1) \in \P(S^4)  ~\slash ~ \rho_{N,SN} (x) = \rho_{S,SN} (y) \} = \shs(S^4)  \, .
\end{equation}
The extension $\co \subset \P(S^4)$ is Hopf-Galois. Furthermore $\P(S^4) $ is a faithfully flat $\co$-module. 
\end{prop}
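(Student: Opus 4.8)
The plan is to prove the three claims in order: first identify the coinvariants, then establish the Hopf-Galois property, and finally deduce faithful flatness. For the coinvariant computation, I would use the trivialization isomorphisms $\Omega_N,\Omega_S$ of \eqref{trivN}. Since the $\aa$-coaction on each local piece $\shs(U_i)\otimes\aa$ is $\id\otimes\Delta$, the coinvariants of a trivial comodule algebra $\shs(U_i)\otimes\aa$ are exactly $\shs(U_i)\otimes 1$. A global coinvariant section $(b^N,b^S)$ must therefore be coinvariant locally, forcing $b^N=x\otimes 1$ and $b^S=y\otimes 1$ with $x\in\rn$, $y\in\rs$; imposing the gluing condition in \eqref{sheafPSN} (which for elements of the form $x\otimes 1$ collapses via $\varepsilon$ to $\rho_{N,SN}(x)=\rho_{S,SN}(y)$) recovers precisely the description \eqref{glsshs} of $\shs(S^4)$. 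This gives $\co=\shs(S^4)$.

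For the Hopf-Galois property I would exhibit a strong connection $l:\aa\to\P(S^4)\otimes\P(S^4)$, since the excerpt recalls that admitting a strong connection is equivalent to being Hopf-Galois. The natural strategy is to build $l$ from the local trivializations. On each chart the extension $\shs(U_i)\subset\shs(U_i)\otimes\aa$ is cleft (the cleaving map is $h\mapsto 1\otimes h$), hence Hopf-Galois with an explicit strong connection coming from the antipode, namely $h\mapsto (1\otimes S(h_{(1)}))\otimes(1\otimes h_{(2)})$. Because $\aa=\aa(SU(2))$ is the standard $SU(2)$ Hopf algebra with well-understood generators, one can write this map concretely on the matrix generators $\x{23},\x{24}$ using the explicit antipode $S(A)$ of Proposition \ref{hopf}. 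The task is then to patch these local strong connections into a global one compatible with the gluing condition \eqref{sheafPSN}; alternatively, one invokes the transition-function formalism of \cite{pflaum} together with the cocycle property of the $\tau_{ij}$ to guarantee that the global algebra $\P(S^4)$ is again piecewise principal, and then appeals to the piecewise-principality results \cite[Thm. 3.3, Cor. 3.10]{hkmz} reinterpreted with the restriction maps as the comodule surjections $\pi_i$. I would prefer the direct construction of $l$ since it is self-contained and immediately yields the inverse canonical map via $\can^{-1}(p\otimes h)=p\,l(h)^{<1>}\otimes_{\co}l(h)^{<2>}$.

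For faithful flatness I would argue locally and then glue. Over each chart $\P(U_i)\simeq\shs(U_i)\otimes\aa$ is faithfully flat over $\shs(U_i)$ because it is a cleft (hence free) module: $\shs(U_i)\otimes\aa\simeq\shs(U_i)\otimes H$ is free as a left $\shs(U_i)$-module with basis a linear basis of $\aa$. Faithful flatness of the global extension $\co\subset\P(S^4)$ then follows either from the general equivalence, for $H$ cosemisimple, between principality and faithful flatness of Hopf-Galois extensions, or from the fact that $\P(S^4)$ is realized as a pullback (fibre product) of the faithfully flat local pieces along surjections, using that faithful flatness descends through such fibre-product constructions under the piecewise-principality hypotheses of \cite{hkmz}. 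The cleanest route is to note that a Hopf-Galois extension by a cosemisimple Hopf algebra which is principal (equivalently admits a strong connection) is automatically faithfully flat, so this claim is a formal consequence of the strong connection constructed in the second step.

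The main obstacle I expect is the second step: constructing a \emph{global} strong connection that is genuinely an element of $\P(S^4)\otimes\P(S^4)$ rather than merely of the local pieces. The subtlety is that $\P$ is \emph{not} flabby — the restriction maps $\rho_{N,SN},\rho_{S,SN}$ are not surjective — so the gluing theorem of \cite{hkmz} does not apply verbatim, and one cannot simply glue the local connections without checking their images match under the non-surjective restriction maps encoded in the compatibility condition of \eqref{sheafPSN}. The resolution hinges on the normality of the twist $\Psi$ and the specific form of the transition functions $\tns,\tsn$ built from the identity and the antipode: because $\tnn,\tss$ factor through the counit, the gluing condition is controlled entirely by the single transition function $\tns$ (and its antipode-partner), and one must verify that the antipode-based local connections on $U_N$ and $U_S$ are intertwined by exactly this transition datum. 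Once this compatibility is checked — a computation that reduces to the cocycle identity for the $\tau_{ij}$ and the $*$-algebra isomorphism $\fsn$ of Proposition \ref{prop:fact} — the global strong connection exists and the remaining two claims follow as above.
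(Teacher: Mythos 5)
Your proposal follows essentially the same route as the paper's proof: the coinvariants are identified from the componentwise coaction $\id\ot\Delta$ together with the gluing condition (which collapses via $\varepsilon$ to $\rho_{N,SN}(x)=\rho_{S,SN}(y)$), the Hopf--Galois property is established by the antipode-based strong connection $l(h)=(1\ot S(h_{(1)}),0)\ot(1\ot h_{(2)},0)+(0,1\ot S(h_{(1)}))\ot(0,1\ot h_{(2)})$, and faithful flatness is deduced from the cosemisimplicity of $\aa(SU(2))$. The only divergence is that the paper does not ``patch'' the two local connections through the transition functions and the cocycle identity: it simply writes down this explicit global formula and verifies the three strong-connection axioms by direct computation, so the compatibility check you single out as the main obstacle never has to be performed.
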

\begin{proof}
The coaction on $\P(S^4)$ is the restriction of the direct sum coaction on $\left(\shs( U_N) \ot \aa \right) \oplus  \left(\shs(U_S) \ot \aa \right)$ given on each summand by the right regular corepresentation $(id \ot \Delta)$. From this one gets the
 explicit form \eqref{Bcoin} of $\co$. To show that $\co \subset \P(S^4)$ is Hopf-Galois we exhibit a strong connection $l:\aa\rightarrow\P(S^4)\otimes\P(S^4)$; we set
\begin{equation}
l(h) := (1\otimes S(h_{(1)}),0)\otimes (1\otimes h_{(2)}) + (0,1\otimes S(h_{(1)}))\otimes (0,1\otimes h_{(2)}) \, . 
\end{equation}
We check the three properties $l$ has to satisfy (see (i), (ii) and (iii) before Lemma \ref{Hrestr}). We begin with $(l\otimes\id)\Delta = (\id\otimes\delta_\aa)\, l$. The left hand side reads
\begin{equation*}
\begin{split}
(l\otimes\id)\Delta(h) & = l(h_{(1)})\otimes l(h_{(2)}) \\
& = \big( (1\otimes S(h_{(1)}),0)\otimes (1\otimes h_{(2)}) + (0,1\otimes S(h_{(1)}))\otimes (0,1\otimes h_{(2)}) \big) \otimes h_{(3)}
\end{split}
\end{equation*}
which agrees with the right hand side
\begin{equation*}
(\id\otimes\delta_\aa)\, l(h)= (\id\otimes\delta_\aa) \, \big( (1\otimes S(h_{(1)}),0)\otimes (1\otimes h_{(2)}) + (0,1\otimes S(h_{(1)}))\otimes (0,1\otimes h_{(2)}) \big)
\end{equation*}
once the explicit form of the coaction  is taken into account. Similarly for 
$(\id\otimes l)\Delta = (_\aa\delta\otimes\id)\, l$, where the left hand side is computed as
\begin{equation*}
\begin{split}
(\id\otimes l)\Delta(h) & = h_{(1)}\otimes l(h_{(2)}) \\
& = h_{(1)} \otimes \, \big(
(1\otimes S(h_{(2)}),0)\otimes (1\otimes h_{(3)}) + (0,1\otimes S(h_{(2)}))\otimes (0,1\otimes h_{(3)}) \big)
\end{split}
\end{equation*}
and the right hand side as
\begin{equation*}
\begin{split}
(_H\delta & \otimes\id)\, l(h) = (_\aa\delta\otimes\id) \, \big( (1\otimes S(h_{(1)}),0)\otimes (1\otimes h_{(2)}) + (0,1\otimes S(h_{(1)}))\otimes (0,1\otimes h_{(2)}) \big) \\
& = S^{-1}(S(h_{(1)}))_{(2)} \otimes \big( (1\otimes (S(h_{(1)}))_{(1)},0)\otimes (1\otimes h_{(2)},0) + (0,1\otimes (S(h_{(1)}))_{(1)})\otimes (0,1\otimes h_{(2)}) \big) \\
& = S^{-1}(S(h_{(1)})) \otimes \big( (1\otimes S(h_{(2)}),0)\otimes (1\otimes h_{(3)},0) +
(0,1\otimes S(h_{(2)})\otimes (0,1\otimes h_{(3)}) \big) \, .
\end{split}
\end{equation*}
Finally, $\widetilde{\can}\circ l = 1\otimes\id$. Indeed
\begin{equation*}
\begin{split}
\widetilde{\can} \, ( l(h) ) & = \widetilde{\can} \, \big( (1\otimes S(h_{(1)})),0)\otimes (1\otimes h_{(2)}) + (0,1\otimes S(h_{(1)}))\otimes (0,1\otimes h_{(2)}) \big) \\
& = \big( (1\otimes S(h_{(1)})),0) \cdot (1\otimes h_{(2)}) + 
(0,1\otimes S(h_{(1)})) \cdot (0,1\otimes h_{(2)}) \big) \otimes h_{(3)} \\
& = \big( (1\otimes 1,0) + (0,1\otimes 1) \big) \otimes \varepsilon(h_{(1)})\, h_{(2)} = (1\otimes 1,1\otimes 1)\otimes h \, .
\end{split}
\end{equation*}
The last assertion  follows from the property of  $\aa(SU(2)) $ to be cosemisimple.
\end{proof}

\subsection{The instanton sheaf}\label{sec:instSheaf}

Let us consider the fundamental left corepresentation of $SU(2)$, $\rho: \IC^2 \ra \aa \ot \IC^2$, $(z_1,z_2) \mapsto A \pot (z_1,z_2)$, where $A$ is the defining matrix introduced in \eqref{matrixA} and $\pot$ indicates tensor product and matrix multiplication combination.   We will use the notation $\rho(z)= \muno{z} \ot \zero{z}$ for the left coaction $\rho$ on $z=(z_1,z_2) \in \IC^2$.
For each open set $U$ of $S^4$ on which the sheaf $\P$ of the quantum Hopf bundle trivializes, the algebra $\IC^2 \ot \P(U)$ can be endowed with a right $\aa$-comodule 
structure via
\be
\Psi: \IC^2 \ot \P(U) \ra \IC^2 \ot \P(U) \ot \aa, \quad z \ot x \mapsto \zero{z} \ot \zero{x} \ot S^{-1}(\muno{z}) \uno{x} \, .
\ee
Set $\V(U):= \left(\IC^2 \ot \P(U) \right)^{co (\aa)}= \left\{ z \ot x ~| ~\Psi(z \ot x)= z \ot x \ot 1\right\} \subseteq \IC^2 \ot \P(U)$ and endow it with the algebra structure inherited from $\IC^2 \ot \P(U)$.
The assignment $U \mapsto \V(U)$   defines a sheaf of algebras $\V$ on $S^4$ (with restriction maps given by extending those of $\P$ to the tensor product via the identity on $\IC^2$). In agreement with \cite[\S 4.2]{pflaum} we refer to $\V$ as the associated quantum vector bundle to $\P$ with typical fiber $\IC^2$. The following result provides the usual equivalent characterization of the associated bundle in terms of a base space  and a typical fiber.

\begin{prop}
For $U$ as above, there exists an isomorphism of algebras
$\V(U) \simeq \shs(U)\ot \IC^2$.
\end{prop}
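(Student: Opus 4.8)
The statement asserts that the instanton bundle is locally free of rank two, so the natural strategy is to reduce to a trivial principal bundle and compute the associated comodule of coinvariants directly.

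First I would trivialise. Since $U$ is an open set on which $\P$ trivialises, the morphism $\Omega_U:\shs(U)\ot\aa\to\P(U)$ of \eqref{trivN} (with $\Omega_U=\Omega_N$ or $\Omega_S$) is an isomorphism of right $\aa$-comodule algebras, the source carrying the coaction $\id\ot\Delta$. Tensoring with $\id_{\IC^2}$ gives a comodule isomorphism $\IC^2\ot\P(U)\cong\IC^2\ot\shs(U)\ot\aa$ that intertwines the coactions $\Psi$; passing to coinvariants therefore yields $\V(U)\cong(\IC^2\ot\shs(U)\ot\aa)^{co(\aa)}$. Because $\Psi$ leaves the $\shs(U)$-tensorand untouched and $\shs(U)\ot_{\IC}(-)$ is exact, the coinvariants split off this factor, giving $(\IC^2\ot\shs(U)\ot\aa)^{co(\aa)}\cong\shs(U)\ot(\IC^2\ot\aa)^{co(\aa)}$.

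The crux is thus to identify $(\IC^2\ot\aa)^{co(\aa)}$, where $\IC^2\ot\aa$ carries the diagonal coaction built from the right coaction $z\mapsto\zero{z}\ot S^{-1}(\muno{z})$ induced by the fundamental corepresentation and the regular coaction on $\aa$. I would write down the coinvariants explicitly: with $A=(A_{kl})$ the defining matrix \eqref{matrixA}, the two elements $\xi_k:=\sum_i e_i\ot A_{ki}$ ($k=1,2$, with indices dictated by the chosen coaction convention) are coinvariant. The verification is a short manipulation using the comatrix identity $\Delta(A_{kl})=\sum_i A_{ki}\ot A_{il}$, the antipode axiom $\sum_i S(A_{ki})A_{il}=\delta_{kl}\,1$, the commutativity of $\aa$, and the sphere relation $\det A=1$ of Proposition \ref{prop:su2beta}. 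That these exhaust the coinvariants follows from the fundamental theorem of Hopf modules \cite{mont} applied to $\IC^2\ot\aa$ (a right $\aa$-Hopf module for right multiplication on $\aa$), which forces the coinvariant subspace to be two-dimensional. Hence $e_k\mapsto\xi_k$ is a linear isomorphism $\IC^2\xrightarrow{\sim}(\IC^2\ot\aa)^{co(\aa)}$, and combining with the previous step produces a bijection $\Phi:\shs(U)\ot\IC^2\to\V(U)$, $b\ot e_k\mapsto\sum_i e_i\ot\Omega_U(b\ot A_{ki})$.

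The remaining, and in my view hardest, point is to promote $\Phi$ to an isomorphism of \emph{algebras}. One must pin down precisely the algebra structure that $\V(U)$ inherits from $\IC^2\ot\P(U)$ and match it, through $\Phi$, with the product on $\shs(U)\ot\IC^2$; the delicate features are the commutation of $\shs(U)$ with the matrix coefficients $A_{ki}$ inside $\P(U)\cong\shs(U)\ot\aa$ and the multiplication carried by the $\IC^2$ factor, since products of the generators $\xi_k$ do not stay within their own $\IC$-span. I would carry out this check on generators, using that $\Omega_U$ is an algebra map together with the coproduct of $A$, verifying compatibility on the coinvariant generators $\xi_k$ and on the copy $\shs(U)\ot 1$ separately and then on their products. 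Bijectivity of $\Phi$ is already guaranteed by the two isomorphisms above, so once compatibility with the multiplications is settled the proof is complete.
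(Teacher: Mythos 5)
Your bijection is the paper's bijection: the map you arrive at, $b\ot e_k\mapsto\sum_i e_i\ot\Omega_U(b\ot A_{ki})$, is exactly the paper's $\Gamma_N(a_N\ot z)=\zero{z}\ot\Omega_N(a_N\ot\muno{z})$. The difference lies in how surjectivity is established. The paper simply writes down the inverse, $z\ot p\mapsto p_1(\Omega_N^{-1}(p))\ot z$ with $p_1$ the projection onto the first tensor factor, and checks nothing more; you instead strip off the spectator factor $\shs(U)$ and identify $(\IC^2\ot\aa)^{co(\aa)}$ as the span of the $\xi_k$ via the fundamental theorem of Hopf modules. Both work: the paper's inverse is shorter, while your argument explains why the space of coinvariants has dimension two and would carry over verbatim to any finite-dimensional corepresentation in place of the fundamental one.

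The one genuine gap is the step you yourself flag as the hardest and then defer: multiplicativity of $\Phi$. As you suspect, it is delicate --- in fact, with the evident componentwise product on $\IC^2$ (the only one making $\IC^2\ot\P(U)$ an algebra in an obvious way) the coaction $\Psi$ is not an algebra map and the coinvariants are not closed under multiplication: $\xi_1\cdot\xi_1=\sum_k e_k\ot A_{1k}^2$ is not coinvariant, since $\sum_k S(A_{kl})A_{mk}A_{m'k}$ does not reduce to Kronecker deltas (for $m=m'=l=1$ it equals $x_{23}^*x_{23}^2+x_{24}^*x_{24}^2\neq 1$). So the check you postpone cannot succeed as stated. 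Note, however, that the paper's own proof never addresses multiplicativity either: it establishes only the bijection and its inverse, i.e.\ an isomorphism of $\shs(U)$-(bi)modules, which is the geometrically meaningful content of an ``associated vector bundle''. Your proof is complete and correct at that level; the right fix is to drop the promised algebra check and read the statement as a module isomorphism.
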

\begin{proof}
For $U=U_N$ consider
the map
\begin{eqnarray}
\Gamma_N: \shs(U_N)\ot \IC^2  & \ra & \V(U_N)=(\IC^2\tn\mathcal{P}(U_N))^{co(\aa)} \nn \\
a_N\tn z & \mapsto & z_{(0)}\tn\Omega_N(a_N\tn z_{(-1)})  \, .
\end{eqnarray}
Since $\Omega_N$ is a morphism of right $\aa$-comodules (see \eqref{trivN}), $\Gamma_N$ takes indeed values in the $\aa$-coinvariant subalgebra of $\IC^2\tn\mathcal{P}(U_N)$. The inverse of $\Gamma_N$ on a generic $(z\tn(b^N\oplus b^{SN}))\in (\IC^2\tn\mathcal{P}(U_N))^{co(\aa)}$ is proven to be $p_1(\Omega_N^{-1}(b^N\oplus b^{SN}))\tn z$, where $p_1$ is the projection onto the first factor. For $U=U_S$ the construction is similar.  	
\end{proof}

The quantum principal bundle and quantum associated bundle discussed so far reduce to the classical Hopf bundle and associated instanton bundle with topological charge (or instanton number) equal to $1$. We recall that classically the instanton number can be characterized as the degree $k\in\pi_3(S^3)=\mathbb{Z}$ of the transition map $\tau_{NS}:SU(2)\simeq S^3\ra U_N\cap U_S\simeq S^3\times I\sim S^3$, where $\sim$ (resp. $\simeq$) stands for homotopic (resp. topological) equivalence.
Quantum bundles with higher instanton numbers are obtained via transition functions of higher (topological) degree; for $k\in\mathbb{N}$ we set $\tnn^k:=\tnn$  and $\tss^k:=\tss$ as in \eqref{trans}, while
\begin{align}
\tns^k: \aa &\ra \ai \, , & 
\tsn^k: \aa &\ra \ai \nn \\   
h &\mapsto h_{(1)}\cdot\ldots\cdot h_{(k)} \ot 1 \, , & 
h &\mapsto S(h_{(k)})\cdot\ldots\cdot S(h_{(1)}) \ot 1 \nn \, .
\end{align}
The resulting principal and associated bundles are referred to as the quantum $SU(2)$-instanton bundles on $S^4_q$  with charge $k\in\mathbb{N}$. One gets negative charges by exchanging $\tns^k$ with $\tsn^k$. For $q=1$ they reduce to the classical $SU(2)$-instanton bundles on $S^4$ of  corresponding charge.

\section{The connection and its anti-self dual curvature}\label{sec:conn}

We define an anti-selfdual connection on the $SU(2)$-Hopf bundle on $S^4_q$. As in many other constructions in this paper, the advantage of a sheaf theoretic approach is that we can work locally on the two patches $U_N$ and $U_S$ in order to describe global objects on $S^4_q$. The local data is assembled together by using the restriction maps written in terms of the isomorphism $\Q$, as done for example in \eqref{glsshs}. We present the explicit formulas for the connection and the curvature on $U_N$ only, the case on $U_S$ being completely similar.
  
Let us consider the following two one-forms in $\ndf$: 
\begin{eqnarray}
\eta_1 &:=& \b{23}^* \d\b{23} + q^2 \b{24} \d\b{24}^* - \d\b{23}^* \b{23} - q^2 \d\b{24} \b{24}^*
\\
\eta_2 &:=&  2(\b{23}^* \d\b{24} - q^2 \b{24} \d\b{23}^*) \, .
\end{eqnarray}
\noindent
We derive some identities to be used shortly after. For pure computational reasons, in order to compare and simplify  the monomials appearing in the expressions below, we choose the following (arbitrary) order among zero and one forms:
$$
\b{23}^* < \b{23} <\b{24} < \b{24}^* <\d\b{23}^* < \d\b{23} <\d\b{24} < \d\b{24}^* \, .
$$

\begin{lem}
$
\eta_1 \wedge_q \eta_1 =0$.
\end{lem}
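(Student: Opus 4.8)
The plan is to prove $\eta_1\wedge_q\eta_1=0$ by expanding the wedge square directly and collecting the sixteen two-form monomials that arise, using the commutation relations in $\ndf$ to reduce everything to a chosen normal-ordered basis of two-forms. Since $\eta_1$ is a sum of four one-form terms, $\eta_1\wedge_q\eta_1$ has sixteen summands. The diagonal terms vanish immediately: for any one-form $\omega$ of the type appearing here, the relations $\b{ij}\,\d\b{ij}=\d\b{ij}\,\b{ij}$ together with $\d(\b{ij})\wedge_q\d(\b{ij})$ being antisymmetric force $(\b{23}^*\d\b{23})\wedge_q(\b{23}^*\d\b{23})$ and its companions to drop out (one-forms of the same differential square to zero after moving the degree-zero factors across). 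So the real content is in the twelve off-diagonal cross terms.

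First I would fix the ordering convention already introduced just before the lemma, namely
$$
\b{23}^* < \b{23} <\b{24} < \b{24}^* <\d\b{23}^* < \d\b{23} <\d\b{24} < \d\b{24}^* \, ,
$$
and rewrite every cross term in this normal form. This requires me to carry the degree-zero generators $\b{23},\b{23}^*,\b{24},\b{24}^*$ past the differentials $\d\b{23},\d\b{23}^*,\d\b{24},\d\b{24}^*$ using the bimodule relations listed in $\ndf$ (e.g. $\b{23}(\d\b{23})^*=q^{-2}(\d\b{23})^*\b{23}$, $\b{23}\,\d\b{24}=\d\b{24}\,\b{23}$, and so on), as well as the degree-zero relations \eqref{ncbeta} and the $*$-relations \eqref{*beta} when reordering the zero-forms among themselves. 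The key mechanism is that the $q$-factors picked up when reordering a term and its ``mirror'' partner — the two terms that produce the same unordered two-form monomial — are arranged to be exactly opposite, so the pair cancels. I would group the twelve cross terms into six such cancelling pairs, each pair collapsing to zero after normalization.

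The main obstacle I anticipate is purely bookkeeping: correctly tracking the powers of $q$ that accumulate from the commutation relations \eqref{ncbeta}, \eqref{sqrsncr}-type bimodule relations, and the factors of $q^2$ explicitly present in the definition of $\eta_1$ itself. A sign or exponent error in any single reordering would spoil a cancellation and produce a spurious surviving term, so the delicate part is ensuring each mirror-pair's coefficients sum to zero after full normalization. I would organize the computation by first writing out all sixteen terms schematically, discarding the four diagonal ones, then normal-ordering each of the remaining twelve and matching them into the six pairs, verifying pairwise cancellation. No deeper structural input is needed beyond the relations already recorded in the excerpt; the result is a finite, if tedious, direct verification, and I would present only the pairing structure together with one representative cancellation, omitting the remaining routine cases.
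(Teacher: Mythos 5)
Your proposal is correct and follows essentially the same route as the paper: a direct expansion of the wedge square, discarding the four diagonal terms (which vanish since $\d\b{ij}\wedge_q\d\b{ij}=0$), then normal-ordering the remaining twelve cross terms with respect to the stated ordering and observing that they cancel in six mirror pairs. The paper's proof is exactly this computation carried out in full, with the twelve normal-ordered monomials displayed and paired off.
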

\begin{proof} 
By using the commutation relations in $\ndf$ and omitting the $\wedge_q$ mark we have
\begin{eqnarray*}
\eta_1 \wedge_q \eta_1 &=& 
q^2 \b{23}^* \d\b{23} \b{24} \d\b{24}^*
- \b{23}^* \d\b{23} \d\b{23}^* \b{23}
- q^2 \b{23}^* \d\b{23} \d\b{24} \b{24}^*
+ q^2 \b{24} \d\b{24}^* \b{23}^* \d\b{23} + \\
&&
- q^2 \b{24} \d\b{24}^* \d\b{23}^* \b{23}
- q^4\b{24} \d\b{24}^* \d\b{24} \b{24}^*
- \d \b{23}^* \b{23} \b{23}^* \d\b{23}
- q^2 \d\b{23}^* \b{23} \b{24} \d\b{24}^*+
\\ &&
q^2  \d\b{23}^* \b{23}  \d\b{24} \b{24}^*  
- q^2 \d\b{24} \b{24}^* \b{23}^* \d\b{23} 
- q^4 \d\b{24}   \b{24}^* \b{24}  \d\b{24}^*
+ q^2  \d\b{24}    \b{24}^* \d\b{23}^*  \b{23}
\\  [3mm]
&=& 
q^2 \b{23}^*  \b{24} \d\b{23} \d\b{24}^*
+ \b{23}^* \b{23} \d\b{23}^* \d\b{23} 
- q^{-2} \b{23}^*  \b{24}^*\d\b{23} \d\b{24}
- q^2 \b{23}^* \b{24}   \d\b{23} \d\b{24}^* + \\
&&
q^6 \b{23} \b{24}  \d\b{23}^* \d\b{24}^* 
+ q^4\b{24}  \b{24}^*  \d\b{24} \d\b{24}^* 
-  \b{23}^*\b{23}  \d \b{23}^*\d\b{23}
- q^6  \b{23} \b{24} \d\b{23}^* \d\b{24}^*+
\\ &&
q^2  \b{23} \b{24}^*  \d\b{23}^* \d\b{24}
+ q^{-2} \b{23}^* \b{24}^*\d\b{23}\d\b{24}
- q^4  \b{24}  \b{24}^* \d\b{24}\d\b{24}^*
- q^2  \b{23} \b{24}^* \d\b{23}^* \d\b{24}
\\  [3mm]
&=& 0 \; .
\end{eqnarray*}
\end{proof}
\begin{lem}\label{lemma12}
$
\eta_1 \wedge_q \eta_2 = - \eta_2 \wedge_q \eta_1$.
\end{lem}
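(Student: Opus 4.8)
The plan is to mirror the proof of the previous lemma. I would expand both $\eta_1 \wedge_q \eta_2$ and $\eta_2 \wedge_q \eta_1$ into monomials, reduce each to normal-ordered form with respect to the ordering fixed above by repeatedly applying the degree-$(0,1)$ and degree-$(1,1)$ commutation relations of $\ndf$, and then verify that the coefficients in $\eta_1 \wedge_q \eta_2 + \eta_2 \wedge_q \eta_1$ cancel term by term. Since $\eta_1$ has four summands and $\eta_2$ has two, each product contributes eight monomials; after normal ordering each becomes a scalar multiple of a basis two-form consisting of two degree-zero and two degree-one generators written in the fixed order (e.g. $\b{23}^*\b{24}\,\d\b{23}^*\wedge_q\d\b{24}^*$). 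The only relations needed beyond those displayed are the degree-one relations, which follow by applying $\d$ to the stated $(0,1)$ relations (giving in particular $\d\b{ij}\wedge_q\d\b{ij}=0$ and the $q$-commutation rules among the $\d\b{ij}$ and $\d\b{ij}^*$), together with the remaining $(0,1)$ relations obtained by applying $*$.

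A conceptual shortcut both predicts the cancellation and re-explains the previous lemma. Assign torus weights to the generators by $w(\b{23})=w(\b{24})=:e_2$ and $w(\b{23}^*)=w(\b{24}^*)=:e_1$ (equivalently $w(\b{13})=w(\b{14})=e_1$); a check against \eqref{ncbeta} and \eqref{*beta} shows this grading is consistent, the antisymmetric bilinear form $B$ recording the $q$-commutation relations being normalized so that $B(e_1,e_2)=2$. Inspecting the summands, one sees that both $\eta_1$ and $\eta_2$ are \emph{homogeneous} one-forms of the single weight $e_1+e_2$. Because $\ndf$ arises as a cocycle (Drinfeld) twist of the classical calculus on $\IR^4$, two homogeneous forms of weights $w,w'$ differ from the classical graded flip only by a phase $q^{B(w,w')}$, which is trivial whenever $w=w'$ by antisymmetry of $B$. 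Hence any two one-forms of this common weight anticommute, which yields $\eta_1 \wedge_q \eta_2 = -\eta_2 \wedge_q \eta_1$ and, with $w=w'$ again, reproves $\eta_1 \wedge_q \eta_1 = 0$.

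I would present the direct computation as the actual proof, since the paper deliberately suppresses the twist formalism, and use the weight argument only as a guide and consistency check. The sole difficulty is bookkeeping: tracking the powers of $q$ and the Koszul signs across the sixteen monomials. There is no structural obstacle, as every reordering is forced by the stated relations; grouping the contributions by their normal-ordered two-form type makes the pairwise cancellations transparent and exactly parallels the organization already used to establish $\eta_1 \wedge_q \eta_1 = 0$.
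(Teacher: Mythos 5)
Your proposed proof is correct and follows essentially the same route as the paper's: a direct expansion of both $\eta_1\wedge_q\eta_2$ and $\eta_2\wedge_q\eta_1$ into monomials, normal ordering via the degree-$(0,1)$ and induced degree-one commutation relations, and term-by-term comparison. Your auxiliary toric-weight argument (both $\eta_1$ and $\eta_2$ homogeneous of weight $e_1+e_2$, so the twist phase $q^{B(w,w)}$ is trivial by antisymmetry of $B$) is a sound consistency check that indeed also explains $\eta_1\wedge_q\eta_1=0$, but since you present the computation as the actual proof, the approach matches the paper's.
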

\begin{proof}
By a direct check. On the one hand
\begin{eqnarray*}
\frac{1}{2} \eta_1 \wedge_q \eta_2 &=& 
\b{23}^* \d\b{23} \b{23}^* \d\b{24}
-q^2 \b{23}^* \d\b{23} \b{24} \d\b{23}^*
+ q^2 \b{24} \d\b{24}^* \b{23}^* \d\b{24}
- q^4 \b{24} \d\b{24}^* \b{24} \d\b{23}^* 
\\ &&
-q^2 \b{23}\d\b{23}^* \b{23}^* \d\b{24}
+ q^2 \b{24}^* \d\b{24} \b{24} \d\b{23}^*
\\  [3mm]
&=& 
q^{-2}\b{23}^* \b{23}^* \d\b{23}  \d\b{24}
+ \b{23}^*  \b{24} \d\b{23}^* \d\b{23}
- q^2 \b{23}^* \b{24}  \d\b{24}\d\b{24}^*  
+ q^6 \b{24}  \b{24} \d\b{23}^* \d\b{24}^*
\\ &&
-\b{23}^* \b{23}  \d\b{23}^* \d\b{24}
- q^2  \b{24} \b{24}^* \d\b{23}^* \d\b{24}  \; .
\end{eqnarray*}
On the other hand
\begin{eqnarray*}
\frac{1}{2} \eta_2 \wedge_q \eta_1 &=& 
\b{23}^* \d\b{24} \b{23}^* \d\b{23}
+q^2 \b{23}^* \d\b{24} \b{24} \d\b{24}^*
- q^2 \b{23}^* \d\b{24} \b{23} \d\b{23}^*
- q^2 \b{24} \d\b{23}^* \b{23}^* \d\b{23} 
\\ &&
-q^4 \b{24}\d\b{23}^* \b{24} \d\b{24}^*
+ q^2 \b{24} \d\b{23}^* \b{24}^* \d\b{24}
\\  [3mm]
&=& 
- q^{-2}\b{23}^* \b{23}^* \d\b{23}  \d\b{24}
+ q^2 \b{23}^* \b{24}  \d\b{24}\d\b{24}^*  
+\b{23}^* \b{23}  \d\b{23}^* \d\b{24}
- \b{23}^*  \b{24} \d\b{23}^* \d\b{23}
\\ &&
- q^6 \b{24}  \b{24} \d\b{23}^* \d\b{24}^*
+ q^2  \b{24} \b{24}^* \d\b{23}^* \d\b{24} 
\end{eqnarray*}
so that $\eta_1 \wedge_q \eta_2 = - \eta_2 \wedge_q \eta_1$.
\end{proof} 

\begin{lem}\label{lemma22*}
$\eta_2 \wedge_q \eta_2^* = - \eta_2^* \wedge_q \eta_2$.
\end{lem}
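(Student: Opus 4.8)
The plan is to verify the identity by a direct computation, in exactly the spirit of the proof of Lemma~\ref{lemma12}. The first step is to make $\eta_2^*$ explicit. Since $\ast$ is a graded anti-involution commuting with $\d$, and since $\b{23}^*\d\b{24}$ and $\b{24}\d\b{23}^*$ are each products of a degree-zero and a degree-one element (so no sign appears), applying $\ast$ termwise gives
\[
\eta_2^* = 2\bigl(\d\b{24}^*\,\b{23} - q^2\,\d\b{23}\,\b{24}^*\bigr),
\]
where I used $q^*=q$, $(\b{23}^*)^*=\b{23}$, $(\b{24})^*=\b{24}^*$ and $(\d\b{24})^{**}=\d\b{24}$. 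If convenient one may push the zero-forms to the left using the degree-zero/degree-one relations in $\ndf$, but this is not essential.

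The second step is to record the relations among the one-forms. These follow by applying $\d$ to the degree-zero/degree-one relations of $\ndf$ and invoking the Leibniz rule \eqref{qleib} together with $\d^2=0$. For instance, differentiating $\b{23}\d\b{24}=\d\b{24}\b{23}$ gives $\d\b{23}\wedge_q\d\b{24}=-\d\b{24}\wedge_q\d\b{23}$, while differentiating $\b{23}(\d\b{24})^*=q^{-2}(\d\b{24})^*\b{23}$ gives $\d\b{23}\wedge_q\d\b{24}^*=-q^{-2}\,\d\b{24}^*\wedge_q\d\b{23}$, and analogously for the remaining pairs. In particular the diagonal relations yield $\d\b{23}\wedge_q\d\b{23}=\d\b{24}\wedge_q\d\b{24}=\d\b{23}^*\wedge_q\d\b{23}^*=\d\b{24}^*\wedge_q\d\b{24}^*=0$.

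The third step is the computation itself. Expanding $\eta_2\wedge_q\eta_2^*$ produces four monomials, each of the shape (zero-form)(zero-form)(two-form), and likewise for $\eta_2^*\wedge_q\eta_2$; using the one-form relations of the previous paragraph one brings every monomial to the normal order fixed just before Lemma~\ref{lemma12}. Comparing the two normal-ordered expressions monomial by monomial then exhibits them as negatives of one another, which is the claim.

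The only genuine difficulty is bookkeeping: one must track the powers of $q$ generated both when commuting the generators $\b{ij}$ past the one-forms and when reordering the one-forms among themselves. I expect these $q$-factors to match up pairwise exactly as the monomials do in the classical limit $q=1$ (where the assertion is simply that any two one-forms anticommute), so that the verification, though lengthy, is entirely mechanical and produces no residual terms.
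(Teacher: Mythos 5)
Your proposal is correct and follows essentially the same route as the paper: a direct expansion of both products, normal-ordering via the degree-$(0,1)$ and degree-$(1,1)$ relations of $\ndf$, and a term-by-term comparison. Your explicit formula for $\eta_2^*$ and your derivation of the one-form relations by applying $\d$ and the Leibniz rule are precisely the ingredients the paper uses (and leaves implicit); the bookkeeping does close up, yielding $\tfrac14\,\eta_2\wedge_q\eta_2^* = q^2\,\b{23}^*\b{23}\,\d\b{24}\,\d\b{24}^* + q^{-2}\,\b{23}^*\b{24}^*\,\d\b{23}\,\d\b{24} - q^6\,\b{23}\b{24}\,\d\b{23}^*\,\d\b{24}^* + q^2\,\b{24}\b{24}^*\,\d\b{23}^*\,\d\b{23} = -\tfrac14\,\eta_2^*\wedge_q\eta_2$ as claimed.
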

\begin{proof}
Similarly to above, with some algebra we compute
\begin{eqnarray*}
\frac{1}{4}\eta_2 \wedge_q \eta_2^* &=&
q^2 \b{23}^* \b{23} \d\b{24}  \d\b{24}^*
+ q^{-2} \b{23}^* \b{24}^*  \d\b{23}\d\b{24}  
-q^6\b{23} \b{24}  \d\b{23}^* \d\b{24}^*
+q^2 \b{24}  \b{24}^* \d\b{23}^* \d\b{23}  \\&=& - \, \frac{1}{4}\eta_2^* \wedge_q \eta_2 \; .
\end{eqnarray*}
\end{proof} 
~ \smallskip
Observe that in $\ndf$ we have 
$$
\eta_1 r^2= r^2 \eta_1 \; , \quad \eta_2 r^2= r^2 \eta_2 
$$
for $r^2= \b{24}^*\b{24} + \b{23}^*\b{23} $ (cf. \eqref{r2}). Let us extend the algebra $\ndf$ by a generator $t$ and its differential $d(t)=dt$ and quotient by the relation 
$$
t(1+r^2)=1= (1+r^2)t \; .
$$
By imposing the Leibniz rule, from the previous equation we get $dt=-t^2 d(r^2)$. Furthermore,
since $r^2 \eta_i = \eta_i r^2$ it follows that $t$ (and $\d t$) has to commute with the $\eta_i$, $i=1,2$. 
\\
\noindent
Consider now the matrix 
\be\label{con-mat}
\con:= \frac{1}{2} t \begin{pmatrix}
\eta_1 & \eta_2
\\
- \eta_2^* & \eta_1^*
\end{pmatrix}
\ee
We observe that $\eta_1^*=-\eta_1$ and $\con \in su(2) \ot {_N}\Omega^1_q$ , so that $\con$ is the local restriction to $U_N$ of a connection one-form on the quantum Hopf  bundle. 

By using the Lemmas above, the curvature $F_\con=\d \con + \con \wedge_q \con$ of the $SU(2)$ potential $\A$ reduces to 
\be\label{cur}
F_\con=
\frac{1}{2}  \begin{pmatrix}
- t^2 dr^2 \wedge_q \eta_1 + t\d\eta_1 - \frac{1}{2} t^2   \eta_2 \wedge_q \eta_2^* ~~&
-t^2 dr^2 \wedge_q \eta_2 + t \d\eta_2 + t^2 \eta_1\wedge_q \eta_2
\\
\\
t^2 dr^2 \wedge_q \eta_2^* -t\d \eta_2^* - t^2 \eta_2^* \wedge_q \eta_1& 
t^2 \d r^2 \wedge_q \eta_1 -t \d \eta_1 + \frac{1}{2} t^2 \eta_2 \wedge_q \eta_2^*
\end{pmatrix}
\ee
and $F_\con$ is an $su(2)$-valued two-form on $\rn$ as expected. 
\begin{thm}
The curvature $F_\con$ has the expression
\be\label{curv}
F_\con= t^2  \begin{pmatrix} 
\d\b{23}^* \d\b{23} 
+ q^2  \d\b{24}  \d\b{24}^* 
& 2  \d\b{23}^* \d\b{24}
\\
\\
- 2  \d\b{24}^* \d\b{23} & - \d\b{23}^* \d\b{23} 
- q^2  \d\b{24}  \d\b{24}^* 
\end{pmatrix}
\ee
and it is anti-selfdual, $\star_q F_\con= - \, F_\con$.
\end{thm}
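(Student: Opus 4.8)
The statement has two parts: first the reduction of the unwieldy expression \eqref{cur} to the closed form \eqref{curv}, and then the anti-selfduality $\star_q F_\con = -F_\con$. I would prove them in this order, since the second part can be read off almost immediately once the first is in place.

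For the reduction, the plan is to feed explicit differentials into \eqref{cur}. Applying the Leibniz rule \eqref{qleib} together with the degree-one relations obtained by differentiating the commutation rules of $\ndf$ (for instance $\d\b{24}\,\d\b{24}^* = -q^{-2}\,\d\b{24}^*\,\d\b{24}$, coming from applying $\d$ to $\b{24}(\d\b{24})^* = q^{-2}(\d\b{24})^*\b{24}$, and likewise $q^2\,\d\b{24}\,\d\b{23}^* = -\d\b{23}^*\,\d\b{24}$), one finds the clean forms $\d\eta_1 = 2(\d\b{23}^*\,\d\b{23} + q^2\,\d\b{24}\,\d\b{24}^*)$ and $\d\eta_2 = 4\,\d\b{23}^*\,\d\b{24}$, which reproduce, up to the factor $t^2/2$, the diagonal and off-diagonal entries sought in \eqref{curv}. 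It then remains to absorb the extra terms of \eqref{cur}. Here I would use the defining relation $t(1+r^2)=1$ in the form $t-t^2 = t^2 r^2$, together with the fact (recorded before \eqref{con-mat}) that $t$ and $\d t = -t^2\,\d r^2$ commute with $r^2$ and with the $\eta_i$. Pulling out the invertible factor $t^2$, the $(1,1)$ and $(1,2)$ entries collapse to $\tfrac{t^2}{2}\,\d\eta_1$ and $\tfrac{t^2}{2}\,\d\eta_2$ precisely when the two identities
$$
r^2\,\d\eta_1 - \d r^2 \wedge_q \eta_1 = \tfrac12\,\eta_2 \wedge_q \eta_2^* , \qquad
r^2\,\d\eta_2 - \d r^2 \wedge_q \eta_2 = -\,\eta_1 \wedge_q \eta_2
$$
hold; the two lower entries then follow from the anti-Hermitian ($su(2)$) structure of $\con$, using $\eta_1^* = -\eta_1$ and Lemma \ref{lemma22*}. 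These two displayed identities are the computational heart of the argument, and I would verify them by expanding both sides with the commutation relations of $\ndf$ and the Lemmas already proved (in particular Lemma \ref{lemma12} and the explicit form of $\eta_2 \wedge_q \eta_2^*$ from Lemma \ref{lemma22*}).

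For the anti-selfduality I would rewrite the entries of \eqref{curv} in the anti-selfdual basis \eqref{ASD}. Using once more $q^2\,\d\b{24}\,\d\b{24}^* = -\d\b{24}^*\,\d\b{24}$, the diagonal entry becomes $\d\b{23}^*\,\d\b{23} - \d\b{24}^*\,\d\b{24} = l_1^-$, while the off-diagonal entries are exactly $2(l_2^- + l_3^-) = 2\,\d\b{23}^*\,\d\b{24}$ and $-2(l_2^- - l_3^-) = -2\,\d\b{24}^*\,\d\b{23}$. Hence
$$
F_\con = t^2 \begin{pmatrix} l_1^- & 2(l_2^- + l_3^-) \\ -2(l_2^- - l_3^-) & -l_1^- \end{pmatrix},
$$
an $su(2)$-valued two-form each of whose entries is a left-module combination of the anti-selfdual generators $l_i^-$. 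Since $\star_q$ is the conformally rescaled classical Hodge operator of the calculus (cf.\ the discussion around Definition \ref{page-Hodge}), it acts as $-\id$ on $\Omega_q^{2,-}$, so every entry is sent to its negative and $\star_q F_\con = -F_\con$ follows.

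The main obstacle is twofold. The genuinely laborious step is verifying the two displayed identities of the first part: the noncommutative wedge products hide a large number of monomials and the cancellations depend delicately on the $q$-powers, so the fixed ordering of zero- and one-forms introduced before the Lemmas is essential to keep the bookkeeping under control. A subtler point in the second part is that the scalar factor $t^2$ does not lie in $\shs(U_N)=\rn$ but in its localization at $1+r^2$; I would therefore note explicitly that $\star_q$ extends to this localization as a degree-zero left-module map, so that $\star_q(t^2\,\omega) = t^2\,\star_q\omega$ and the factor $t^2$ may legitimately be carried outside before applying Hodge duality to the $l_i^-$.
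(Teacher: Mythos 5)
Your proposal is correct and follows essentially the same route as the paper: the paper likewise expands $\d r^2\wedge_q\eta_i$, $\tfrac12 t\,\d\eta_i$ and the products $\eta_i\wedge_q\eta_j$ entry by entry, collects the result as $(-t^2r^2+t)$ times the target two-form, and invokes $t-t^2r^2=t^2$, which is just the other side of your reduction to the two $t$-free identities; it then concludes anti-selfduality by comparison with \eqref{ASD} exactly as you do. Your extra remarks — deducing the bottom row from the $su(2)$ structure and noting that $\star_q$ must be extended as a left-module map to the localization containing $t$ — are sound refinements of the same argument rather than a different one.
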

\begin{proof}
We start by computing the single summand of the entrance  $(F_\con)_{11}$: 
\begin{eqnarray*}
- \frac{1}{2} t^2 dr^2 \wedge_q \eta_1 &=&  -\frac{1}{2}
t^2 \left(
\b{23}^*\b{23} \d \b{23}^* \d\b{23} 
+ q^6 \b{23}\b{24} \d \b{23}^* \d\b{24}^*
-q^2 \b{23}\b{24}^* \d \b{23}^* \d\b{24} 
+q^2 \b{23}^*\b{24} \d \b{23} \d\b{24}^* 
\right.
\\
&&
+ \b{23}^*\b{23} \d \b{23}^* \d\b{23} 
-q^{-2}\b{23}^*\b{24}^* \d \b{23} \d\b{24} 
-q^2\b{23}^*\b{24} \d \b{23} \d\b{24}^* 
+ q^6 \b{23}\b{24} \d \b{23}^* \d\b{24}^* 
\\
&& \left.
+ q^4 \b{24}\b{24}^* \d \b{24} \d\b{24}^* 
-q^{-2} \b{23}^*\b{24}^* \d \b{23} \d\b{24} 
+ q^4 \b{24}\b{24}^* \d \b{24} \d\b{24}^* 
+q^2 \b{23}\b{24}^* \d \b{23}^* \d\b{24} 
\right)
\\
&=&  -
t^2 \left(
 \b{23}^*\b{23} \d \b{23}^* \d\b{23} 
+  q^6 \b{23}\b{24} \d \b{23}^* \d\b{24}^*
- q^{-2}\b{23}^*\b{24}^* \d \b{23} \d\b{24}  
+  q^4 \b{24}\b{24}^* \d \b{24} \d\b{24}^* 
\right) .
\end{eqnarray*}
The term involving the differential of $\eta_1$ is
$\frac{1}{2} t d\eta_1 =  t(\d \b{23}^* \d\b{23} + q^2  \d\b{24} \d \b{24}^* )$, while $\eta_2 \wedge_q \eta_2^*$ was already computed in Lemma \ref{lemma22*} and gives
$$ 
- \frac{1}{4} t^2 \eta_2 \wedge_q \eta_2^* = -t^2 \left(
q^2 \b{23}^* \b{23} \d\b{24}  \d\b{24}^*
+ q^{-2} \b{23}^* \b{24}^*  \d\b{23}\d\b{24}  
-q^6\b{23} \b{24}  \d\b{23}^* \d\b{24}^*
+q^2 \b{24}  \b{24}^* \d\b{23}^* \d\b{23}  \right)
.$$
Summing them we obtain
\begin{eqnarray*}
(F_\con)_{11} &=&  \left( -t^2 \b{23}^* \b{23} + t -q^2 t^2 \b{24} \b{24}^* \right) \left( \d\b{23}^* \d\b{23} 
+ q^2  \d\b{24}  \d\b{24}^* \right)
\\
&=&  \left( -t^2 r^2 + t \right) \left( \d\b{23}^* \d\b{23} 
+ q^2  \d\b{24}  \d\b{24}^* \right) 
\\
&=& t^2 \left( \d\b{23}^* \d\b{23} 
+ q^2  \d\b{24}  \d\b{24}^* \right).
\end{eqnarray*}
Similarly, we now compute  $(F_\con)_{12}$:
\begin{eqnarray*}
- \frac{1}{2} t^2 dr^2 \wedge_q \eta_2 &=&  -
t^2 \left( 
\b{23}^*\b{23} \d \b{23}^* \d\b{24} 
+ q^{-2} \b{23}^*\b{23}^* \d \b{23} \d\b{24}
+ \b{23}^*\b{24} \d \b{23}^* \d\b{23} 
-q^2 \b{23}^*\b{24} \d \b{24} \d\b{24}^* +
\right.
\\
&& \left.
+ q^6 \b{24}\b{24} \d \b{23}^* \d\b{24}^* 
+q^2 \b{24}\b{24}^* \d \b{23}^* \d\b{24} 
\right)\\
\frac{1}{2} t d\eta_2 &=& t(\d \b{23}^* \d\b{24} - q^2  \d\b{24} \d \b{23}^* )= 2 t \d \b{23}^* \d\b{24} \, .
\end{eqnarray*}
We sum the previous terms to $\frac{1}{2} t^2 \eta_1\wedge_q \eta_2$, whose expression was computed in \ref{lemma12};  the only terms which do not cancel give
%\begin{eqnarray*}
%(F_\con)_{12} &=&  \left( -2 t^2 \b{23}^* \b{23} +2 t -2 q^2 t^2 \b{24} \b{24}^* \right)  \d\b{23}^* \d\b{24} 
%\\
%&=&  \left( -2 t^2 r^2 + 2 t \right)  \d\b{23}^* \d\b{24} 
%\\
%&=& 2 t^2 \d\b{23}^* \d\b{24} \, .
%\end{eqnarray*}
$$
(F_\con)_{12} = \left( -2 t^2 \b{23}^* \b{23} +2 t -2 q^2 t^2 \b{24} \b{24}^* \right)  \d\b{23}^* \d\b{24} = \left( -2 t^2 r^2 + 2 t \right)  \d\b{23}^* \d\b{24} = 2 t^2 \d\b{23}^* \d\b{24} \, .
$$
Comparing this with the explicit expression of anti-selfdual forms in $\ndf$ given in \eqref{ASD}, we conclude that
$F_\con$ is anti-selfdual.
\end{proof}
\medskip

The above Theorem shows that  the connection $\con $ in  \eqref{con-mat}(together with its analogue in $\sdf$) describes an anti-instanton on $\sph$. For $q=1$ the potential $\con $ reduces to the basic anti-instanton of charge $k=-1$ and its curvature $F_\con$ agrees with $F_\con = (1+|\mathbf{x}|^2)^{-2} d\bar{\mathbf{x}}d\mathbf{x}$ in  quaternionic notation $\mathbf{x}= x_0+\i \,x_1 + \mathrm{j}\,x_2+ \mathrm{k}\,x_3$ (see \cite[Ch.II]{at}). For the relation among generators $\beta$  and $x_i$ see \eqref{betax}.

%%%%%%%%%%%%%%%%%%%%%%%%%%%%%%%%%%%%%%%%%%%%%%
\appendix
\section{Quantum principal bundles}\label{app:pflaum}
In this appendix we recall the main results from the theory  of quantum principal bundles developed by Pflaum in \cite{pflaum}  which are used in our construction.\\

Let $M$ be a topological space with a covering $\mathcal{U}=(U_i)_{i\in I}$ and $\ma$ a subcategory of the category of (not necessarily commutative) associative algebras.  Let:
\begin{itemize}
\item[-] $\Mm$ and $\P$ be two sheaves on $M$ with objects in $\ma$  such that $\Mm(U), \P(U)$ are unitary for each open set $U \subset M$;  
\item[-]  $H$ be a Hopf algebra (with coproduct and counit denoted respectively by $\Delta$  and
$\varepsilon$);
\item[-] a sheaf morphism $\sigma: \Mm \ra \P$ such that the sequence $0 \ra \Mm \stackrel{\sigma}{\ra} \P$ is exact;
\item[-] a family of sheaf morphisms $(\Omega_i)_{i\in I}$, with $\Omega_i: \Mm_{|_{U_i}} \ot H \ra \P_{|_{U_i}}$ such that the sheaf morphisms
$\Omega_{ij} : \Mm_{|_{U_i \cap U_k}} \ot H \ra \Mm_{|_{U_i \cap U_k}} \ot H$ 
defined by 
\be
(\Omega_{ij})_U:=(\Omega_{i})^{-1}_U \circ (\Omega_{j})_U \; , \quad U \subseteq U_i \cap U_j \mbox{ open}
\ee
satisfy the following equations:
\begin{eqnarray*}
&& (\Omega_i)_U (f \ot 1)= \sigma_U(f) , \quad U \subseteq U_i, \; f \in \Mm(U)
\\
&& \left( (\Omega_{ij})_U \ot id \right) \circ(id \ot \Delta)= (id \ot \Delta)\circ (\Omega_{ij})_U ,
\quad U \subseteq U_i \cap U_j.
\end{eqnarray*}
\end{itemize}

The data $\left( \P,\Mm,\sigma,H,(\Omega_i)_{i\in I} \right)$ defines an $\ma$-quantum principal bundle over $M$ with total quantum space $\P$, base quantum space $\Mm$ and structure group $H$. 
\footnote{In \cite{pflaum} the author adopts a higher degree of generality and assumes  the local trivializations $\Omega_i$ to be  defined on  crossed products  
$\Mm(U_i) \#_i H$ of $\Mm(U_i)$ and $H$ arising from 2-cocycle deformations. Nevertheless for our scope we can simplify the theory and work with $\Mm(U_i) \ot H$.}
The principal (co)action is given by the sheaf morphism 
$
\phi: \P \ra \P \ot H$ uniquely determined (cf. \cite[Thm.3.7]{pflaum}) by the condition
\be\label{prin-coa}
\phi_U:= (\Omega_i \ot id)\circ (id \ot \Delta) \circ \Omega_i^{-1}, \quad U \subseteq U_i \;.
\ee
For each open set $U$, $\phi_U$ defines a right $H$-coaction turning $\P(U)$ into a right $H$-comodule.
The `local coordinate changes' $\Omega_{ij}$ allows for the definition of a family of linear maps 
\be
\tau_{ij}:H \ra \Mm(U_i \cap U_j) , \quad \tau_{ij}(h):= (id \ot \varepsilon)\circ \Omega_{ij}(1 \ot h),\quad i,j \in I, ~h \in H
\ee
which by construction  satisfy the following conditions:
\begin{itemize}
\item[(i)] $\tau_{i,i}(1)=1$ ~;
\item[(ii)] $\tau_{ii}(h)=\varepsilon(h)\cdot 1 , ~h \in H$ ~;
\item[(iii)] $\rho_{U_i \cap U_j,U} \circ \tau_{ij}= (\rho_{U_i \cap U_k,U} \circ \tau_{ik}) * (\rho_{U_k \cap U_j,U} \circ \tau_{kj}) $, for $U \subseteq U_i \cap U_j \cap U_k$,
\end{itemize}
where  $\rho_{V,W}: \Mm(V) \ra \Mm(W)$ are the restriction maps of the sheaf  $\Mm$,  $V,W \subset M$ open, and $*$ denotes the convolution product in $Hom(H,\Mm(U)).$ 
The maps $\tau_{ij}$, which in general are not algebra morphisms, are referred to as the transition functions of the quantum principal bundle. 

As it happens in classical geometry, also in the quantum case it is possible to reconstruct  a quantum principal bundle out of its transition functions. If it is given a Hopf algebra $H$, an open covering $\mathcal{U}=(U_i)_{i\in I}$  of $M$ and a sheaf $\Mm$ over $M$ with objects in $\ma$, then a family of linear maps $
\tau_{ij}:H \ra \Mm(U_i \cap U_j)$ fulfilling the conditions $(i)-(iii)$ above, also referred to as an $H$-cocycle, determines a quantum 
principal bundle  $\left( \P,\Mm,\sigma,H,(\Omega_i)_{i\in I} \right)$ over $M$. Assuming now that the linear maps $\tau_{ij}$ are algebra morphisms,   the quantum total space $\P$ is constructed like follows: for all $U$ open in $M$ the algebra $\P(U)$ is defined to be 
\begin{eqnarray}
\P(U)&:=& \left\{  (f_i) \in \oplus_{i \in I} \Mm(U_i \cap U) \ot H ~\big|~ \forall j,k \in I,  \quad 
 \left(\rho_{U_j \cap U , U_j \cap U_k \cap U} \ot id\right)(f_j) =
\right.
\\ &&
\left.
~=
(m \ot id) (id \ot \rho_{U_j \cap U_k, U_j \cap U_k \cap U} \circ  \tau_{j,k} \ot id )
 (id \ot \Delta) (\rho_{U_k \cap U, U_j \cap U_k \cap U} \ot id)(f_k) 
\right\}. \nn
\end{eqnarray}
The sheaf morphism $\sigma: \Mm \ra \P$ is then simply given by
$$\sigma_U: \Mm(U) \ra \P(U), \quad f \mapsto \sum_{i \in I} \rho_{U,U_i \cap U}(f) \ot 1
 $$ for each $U \subseteq M$ open. Finally the sheaf morphisms  $\Omega_i: \Mm_{|_{U_i}} \ot H \ra \P_{|_{U_i}}$, $i\in I$, are given by setting for each $U \subseteq U_i$
\begin{eqnarray}
&&(\Omega_i)_U: \Mm(U) \ot H \ra \P(U)~, \quad f \mapsto (f_k)_{k \in I} ,
\\
&& f_k:=  (m \ot id) (id \ot \rho_{U_i \cap U_k, U_i \cap U_k \cap U} \circ  \tau_{i,k}\ot id )
 (id \ot \Delta) (\rho_{U_i \cap U, U_i \cap U_k \cap U} \ot id)(f). \nn
\end{eqnarray}
\noindent
The maps $\Omega_i$ are bijective   and in particular they are 
isomorphisms of right $H$-comodules, for $\Mm(U) \ot H$ endowed with the coaction $id \ot \Delta$ and $\P$ with coaction \eqref{prin-coa}.

\section{Twisted tensor products}\label{se:app}
Consider two associative and unital algebras $(A,\cdot_A)$ and  $(B,\cdot_B)$,  $a,a' \in A$ and $b,b' \in B$. A twist map is just a linear map $\Psi:B\otimes A\ra A\otimes B$. Set $\Psi(b\otimes a)=a^{[\Psi]}\otimes b^{[\Psi]}$. To each twist map $\Psi$ one can associate a multiplication $$\cdot_{\Psi}=(\cdot_A\otimes\cdot_B)(\id_A\otimes\Psi\otimes\id_B)$$ in the tensor product $A\otimes B$. Algebras of the form $(A\otimes B,\cdot_{\Psi})$ are referred to as twisted tensor product algebras $A\otimes_{\Psi}B$. The ordinary tensor product algebra $(A\otimes B,\cdot_{\otimes})$ corresponds to  the flip map $\Psi=\tau$. A twist map is said to be normal if $\Psi(b\otimes 1_A)=1_A\otimes b$ and $\Psi(1_B\otimes a)=a\otimes 1_B$ for every $a,b$. There are necessary and/or sufficient conditions on $\Psi$ such that $\cdot_{\Psi}$ is unital and/or associative, see e.g. \cite{cae} for normal twists and \cite{cp13} for non-normal twists. In particular if $\Psi$ is normal and satisfies 
\begin{eqnarray}
&&(\id_A\ot m_B)(\Psi \ot \id_B)(\id_B \ot \Psi) =\Psi (m_B \ot \id_A )
\, , \label{P1-app}
\\
&& (m_A \ot \id_B)(\id_A\ot\Psi)(\Psi \ot \id_A) =\Psi (\id_B \ot m_A )\,  ,\label{P2-app}
\end{eqnarray}
then 
the multiplication $m_\theta$ is associative \cite{cae}.

We are interested in the compatibility of twisted tensor products with $*$-structures.
Let $A,B$ unital $*$-algebras with $*$-structures $*_A,~ *_B$ respectively. Let $*_\ot:=(*_A \ot *_B)$. 
\begin{defi} 
A map $\Psi': A \ot B \ra A \ot B$ is said to be $*$-compatible provided 
\be\label{*comp}
*_\ot = \Psi ' \circ *_\ot \circ \Psi '
\ee
\end{defi}
\begin{prop}\label{prop:*}
Let $\Psi: B \ot A \ra A \ot B $ be a {normal} twist. 
If $\Psi':=\Psi \circ \tau$ is $*$-compatible, then 
\be
*_\Psi :=  (\Psi \circ \tau) \circ *_\ot : a \ot b \mapsto \Psi(b^* \ot a^*) 
\ee
defines a $*$-structure in the twisted tensor algebra $A \ot_\Psi B$.
\end{prop}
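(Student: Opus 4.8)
The plan is to verify the three defining properties of an involution for the map $*_\Psi = \Psi' \circ *_\otimes$, where I abbreviate $\Psi' := \Psi \circ \tau$ and $*_\otimes := *_A \otimes *_B$: namely that $*_\Psi$ is conjugate-linear, that it squares to the identity, and that it reverses the product $\cdot_\Psi$. Conjugate-linearity is immediate, since $*_\otimes$ is conjugate-linear while $\Psi$ and $\tau$ are linear. For the involutivity $(*_\Psi)^2 = \id$ I would simply compute
$$ (*_\Psi)^2 = \Psi' \circ *_\otimes \circ \Psi' \circ *_\otimes = (\Psi' \circ *_\otimes \circ \Psi') \circ *_\otimes = *_\otimes \circ *_\otimes = \id, $$
where the middle equality is exactly the $*$-compatibility hypothesis \eqref{*comp} and the last uses that $*_\otimes$ is itself an involution (being the tensor product of the involutions $*_A$, $*_B$). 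So far the hypothesis enters only mildly.

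The substantial part is the anti-multiplicativity $(u \cdot_\Psi v)^{*_\Psi} = v^{*_\Psi} \cdot_\Psi u^{*_\Psi}$, and this is where I expect the main obstacle. My strategy is to reduce to generators: since $A \otimes 1$ and $1 \otimes B$ generate $A \otimes_\Psi B$ and $\Psi$ is normal, a routine induction on word length shows it suffices to establish $(g \cdot_\Psi x)^{*_\Psi} = x^{*_\Psi} \cdot_\Psi g^{*_\Psi}$ for $g$ in one of the two generating subalgebras and $x = c \otimes d$ arbitrary. Writing $\Psi(b \otimes a) = a^{[\Psi]} \otimes b^{[\Psi]}$ and recording via normality that $(a \otimes 1)^{*_\Psi} = a^* \otimes 1$ and $(1 \otimes b)^{*_\Psi} = 1 \otimes b^*$, the case $g = a \otimes 1$ unwinds to the identity $\Psi(d^* \otimes c^* a^*) = (m_A \otimes \id_B)(\id_A \otimes \Psi)(\Psi \otimes \id_A)(d^* \otimes c^* \otimes a^*)$, which is precisely the associativity condition \eqref{P2-app} evaluated on $d^* \otimes c^* \otimes a^*$, combined with the anti-multiplicativity $(ac)^* = c^* a^*$ of $*_A$.

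The genuinely delicate case is $g = 1 \otimes b$, and here the hypothesis \eqref{*comp} becomes indispensable. In this case the left-hand side becomes $\Psi(d^* b_1^* \otimes c_1^*)$ with $c_1 \otimes b_1 := \Psi(b \otimes c)$, which I would rewrite using \eqref{P1-app} applied to $d^* \otimes b_1^* \otimes c_1^*$ so as to isolate the inner factor $\Psi(b_1^* \otimes c_1^*)$. The key identity I would extract from \eqref{*comp} is that $\Psi(b_1^* \otimes c_1^*) = c^* \otimes b^*$; equivalently, $(\Psi(b \otimes c))^{*_\Psi} = c^* \otimes b^*$, which follows by unwinding $\Psi(b \otimes c) = \Psi'(c \otimes b)$ through the definition of $*_\Psi$ and invoking $\Psi' \circ *_\otimes \circ \Psi' = *_\otimes$. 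Substituting this collapses the expression to $(\id_A \otimes m_B)(\Psi(d^* \otimes c^*) \otimes b^*)$, which is exactly $v^{*_\Psi} \cdot_\Psi g^{*_\Psi}$.

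In summary, the only real work is the careful bookkeeping of the two hexagon-type conditions \eqref{P1-app}--\eqref{P2-app} against the conjugation, and the crux is the recognition that $*$-compatibility is precisely the statement needed to reverse the order in the mixed term $(1 \otimes b) \cdot_\Psi (c \otimes d)$; everything else reduces to normality of $\Psi$ and the anti-multiplicativity of $*_A$ and $*_B$. I therefore expect the entire difficulty to be concentrated in the single generator case $g = 1 \otimes b$, with the remaining cases being essentially formal.
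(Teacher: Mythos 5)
Your proposal is correct and follows essentially the same route as the paper's proof: involutivity is read off directly from the $*$-compatibility condition \eqref{*comp}, and anti-multiplicativity is reduced via normality to generator cases, with \eqref{P2-app} handling $g=a\ot 1$ and \eqref{P1-app} together with \eqref{*comp} handling $g=1\ot b$. The only cosmetic difference is organizational — you induct on word length against an arbitrary second factor and isolate the identity $\Psi(b_1^*\ot c_1^*)=c^*\ot b^*$, whereas the paper checks the cross term $(1\ot b)\cdot_\Psi(a\ot 1)$ directly and then splits a general product into four generator factors — but the underlying argument is the same.
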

\begin{proof}
It follows immediately from \eqref{*comp} that $*_\Psi \circ *_\Psi = id$. To prove that $*_\Psi$ is an algebra involution we proceed in different steps.  We denote by $\cdot_\Psi$ the multiplication in $A \ot_\Psi B$.  First, by using \eqref{*comp} and the fact that $\Psi$ is normal,  we have
$$
\left( (1 \ot b) \cdot_\Psi (a \ot 1) \right)^{*_\Psi}= \Psi' \circ *_\ot \left( \Psi (b \ot a) \right)= *_\ot (a \ot b) = 
 (a \ot 1)^{*_\Psi} \cdot_\Psi (1 \ot b)^{*_\Psi}  \, .
$$ 
Similarly, from the definition of $*_\Psi$,  we can prove the results on $ (a \ot 1) \cdot_\Psi (1 \ot b) $. 
Finally, before proving the result in full generality, we show it holds on $ (a \ot 1) \cdot_\Psi (a' \ot b) $ and $ (a \ot b) \cdot_\Psi (1 \ot b')$. For this we need to use the eqs. \eqref{P1-app} and \eqref{P2-app} above.  For instance 
\begin{eqnarray*}
\left(  (a \ot 1) \cdot_\Psi (a' \ot b) \right)^{*_\Psi} & =& \Psi (b^* \ot {a'}^* a^*) =
({a'}^*)^{[\Psi]} (a^*)^{[[\Psi]]} \ot ({b^*}^{[\Psi]})^{[[\Psi]]} \\
&=& \left( ({a'}^*)^{[\Psi]} \ot (b^*)^{[\Psi]} \right) \cdot_\Psi (a^* \ot 1) = (a' \ot b)^{*_\Psi}) \cdot_\Psi ( a \ot 1)^{*_\Psi}
\end{eqnarray*}
where in the second equality we have used \eqref{P2}.
Then, we can conclude that $*_\Psi$ is an involution on the generic element $ (a \ot b) \cdot_\Psi (a' \ot b')$ by using the hypothesis of normality of 
$\Psi$ to split the product as $ (a \ot b) \cdot_\Psi (a' \ot b')=  (a \ot 1) \cdot_\Psi (1 \ot b) \cdot_\Psi (a' \ot 1) \cdot_\Psi (1 \ot b')$ and thus by applying the above intermediate results.
\end{proof}


\begin{thebibliography}{99}
\bibitem{at} M. Atiyah,
\emph{The geometry of Yang-Mills fields}, Lezioni Fermiane. Accademia 
Nazionale dei Lincei e Scuola Normale  Superiore, Pisa 1979.
\bibitem{bm}T. Brzezi{\'n}ski, S. Majid,
\emph{Quantum group gauge theory on quantum spaces}, Commun. Math. 
Phys. 157 (1993)
591-638. Erratum 167 (1995) 235.
\bibitem{cae} S. Caenepeel,  Bogdan Ion ; G. Militaru,  Shenglin Zhu, \emph{The factorization problem and the smash biproduct of algebras and coalgebras.} Algebr. Represent. Theory 3 (2000), no. 1, 19-42.
\bibitem{clsI} L. S. Cirio, G. Landi, R. J. Szabo, \emph{Algebraic deformations of toric varieties I. General constructions.} Adv. Math. 246 (2013) 33-88.
\bibitem{clsII} L. S. Cirio, G. Landi, R. J. Szabo, \emph{Algebraic deformations of toric varieties II. Noncommutative Instantons.} Adv. Theor. Math. Phys. 15 (2011) n. 6, 1817-1908. 
\bibitem{cp13} L. S. Cirio, C. Pagani, \emph{Deformation of tensor product (co)algebras via non-(co)normal twists}, {Comm. Algebra} 42 (2014) 1-29.
\bibitem{cl00} A. Connes, G. Landi, \emph{Noncommutative manifolds: the instanton algebra and isospectral deformations}, Comm. Math. Phys. 221 (2001) 141--159.
\bibitem{dav} K. R. Davidson, \emph{$C^*$-algebras by example.}  Fields Institute Monographs, 6. AMS, Providence, RI, 1996. xiv+309 pp.
\bibitem{hkmz} P. Hajac,  U. Kr\"ahmer, R. Matthes, B. Zieli\'nski, \emph{Piecewise principal comodule algebras.} J. Noncommut. Geom. 5 (2011), no. 4, 591-614. 
\bibitem{lpr06} G. Landi, C. Pagani, C. Reina, \emph{A Hopf bundle over a quantum four-sphere from the symplectic group}, Commun. Math. Phys. 263 (2006) 65--88.
\bibitem{mont} S. Montgomery, Hopf Algebras and Their Actions on Rings. CBMS Lecture Notes vol 82, American Math Society, Providence, RI, 1993.
\bibitem{pflaum} M. Pflaum, \emph{Quantum groups on fibre bundles.} 
Comm. Math. Phys. 166 (1994), no. 2, 279-315. 
\bibitem{sch} H. Schneider,
\emph{Principal  homogeneous spaces for arbitrary Hopf algebras},  Israel J. of
Math.  72 (1990) 167-195.

\end{thebibliography}
\end{document}